\documentclass[]{interact}

\usepackage{mathtools,mathrsfs}
\usepackage{enumitem}
\usepackage{xcolor}
\usepackage[hidelinks]{hyperref}
\usepackage[nameinlink,capitalize,noabbrev]{cleveref}
\usepackage[longnamesfirst,sort]{natbib}
\bibpunct[, ]{(}{)}{;}{a}{,}{,}
\renewcommand\bibfont{\fontsize{9}{10.5}\selectfont}
\hypersetup{
  pdftitle={Morita Transport, Marked Projective Loci, and Dedekind
            Classification for C4-Type Conditions},
  pdfauthor={Chandrasekhar Gokavarapu, Sekhar Babu Gosala, Sudhakar Gadde,
             Rajasekhar Yedla, and Durga Ratna Sai Ryali},
  pdfsubject={C4-type module conditions, Morita transport, and Dedekind
              module classification},
  pdfkeywords={C4-module, C4-star module, Morita equivalence, projective
               monoid, Dedekind domain, torsion submodule}
}

\newtheorem{theorem}{Theorem}[section]
\newtheorem{proposition}[theorem]{Proposition}
\newtheorem{lemma}[theorem]{Lemma}
\newtheorem{corollary}[theorem]{Corollary}
\theoremstyle{definition}
\newtheorem{definition}[theorem]{Definition}

\theoremstyle{remark}
\newtheorem{remark}[theorem]{Remark}

\newcommand{\Mod}{\operatorname{Mod}}
\newcommand{\End}{\operatorname{End}}
\newcommand{\Hom}{\operatorname{Hom}}
\newcommand{\im}{\operatorname{Im}}
\newcommand{\soc}{\operatorname{soc}}

\newcommand{\Vmon}{\mathrm V}
\newcommand{\Cfour}{\mathrm{C4}}
\newcommand{\Cfourstar}{\mathrm{C4}^{\ast}}
\newcommand{\leess}{\leq_{\mathrm e}}
\newcommand{\lesum}{\leq_{\oplus}}

\begin{document}

\articletype{RESEARCH ARTICLE}

\title{Morita Transport, Marked Projective Loci, and Dedekind
Classification for $\Cfour$-Type Conditions}

\author{
\name{Chandrasekhar Gokavarapu\textsuperscript{a,b}\thanks{CONTACT
Chandrasekhar Gokavarapu. Email:
\href{mailto:chandrasekhargokavarapu@gmail.com}{chandrasekhargokavarapu@gmail.com}.
ORCID:
\href{https://orcid.org/0009-0006-5306-371X}{0009-0006-5306-371X}},
Sekhar Babu Gosala\textsuperscript{a,c},
Sudhakar Gadde\textsuperscript{a,d},
Rajasekhar Yedla\textsuperscript{a,d}, and
Durga Ratna Sai Ryali\textsuperscript{e}}
\affil{\textsuperscript{a}Lecturer in Mathematics, Department of
Mathematics, Government College (Autonomous), Rajahmundry, Andhra Pradesh
533105, India;
\textsuperscript{b}Research Scholar, Department of Mathematics, Acharya
Nagarjuna University, Guntur, Andhra Pradesh 522510, India;
\textsuperscript{c}Research Scholar, Department of Engineering Mathematics,
Andhra University, Visakhapatnam, Andhra Pradesh 530003, India;
\textsuperscript{d}Research Scholar, Department of Mathematics, Godavari
Global University, Rajamahendravaram, Andhra Pradesh 533296, India;
\textsuperscript{e}Student, M.Sc. Mathematics Programme, 2025--27 Batch,
Department of Mathematics, Government College (Autonomous), Rajahmundry,
Andhra Pradesh 533105, India}
}

\maketitle

\begin{abstract}
Objectwise Morita invariance of $\Cfour$ is known.  We prove the corresponding
statements for $\Cfourstar$, semi-weak-CS, and strongly $\Cfourstar$,
including the chain joins in the definition of semi-weak-CS.  For a module
property $\mathcal Q$ preserved and reflected by equivalences, we consider
the subset $\Vmon_{\mathcal Q}(R)$ of the finitely generated projective
monoid $\Vmon(R)$.  This marked pair is transported by Morita equivalence.
The associated regular-module ring property is Morita invariant if and only
if membership in $\Vmon_{\mathcal Q}(R)$ is constant on the order units.
The matrix profile satisfies
$\mu_{\mathcal Q}(M_m(R))=\{n\geq1:mn\in\mu_{\mathcal Q}(R)\}$.
For a nondivision right Ore domain, the profiles of $\Cfour$, $\Cfourstar$,
and strongly $\Cfourstar$ are $\{1\}$, whereas the semi-weak-CS profile is
$\mathbb N_{\geq1}$.  Over a nonfield Dedekind domain $D$, the first three
marked loci consist of the projectives of rank at most one, while every
finitely generated projective is semi-weak-CS.  For a finitely generated
$D$-module $M$, we prove that every $\Cfour$ defect at rank at most one can
be transferred between $M$ and $\tau(M)$, and we construct a defect when the
rank is at least two.  Hence $M$ is $\Cfour$ if and only if
$\operatorname{rank}_D M\leq1$ and $\tau(M)$ is $\Cfour$; the analogous
criterion holds for $\Cfourstar$.  Combined with the finite-length torsion
criterion, these reductions give invariant-factor tests for the four
conditions and criteria for their behaviour under direct sums.
\end{abstract}

\begin{keywords}
$\Cfour$-module; Dedekind domain; Morita equivalence; order unit; projective
monoid; torsion radical
\end{keywords}

\begin{amscode}
2020 Mathematics Subject Classification: 16D90; 16D40; 16S50; 13C05; 13C10;
13F05; 18E35
\end{amscode}

\section{Introduction}

\subsection*{Existing literature}

The $\Cfour$ condition was introduced as a common generalization of the
$\mathrm C3$ and square-free conditions \citep{DingEtAl2017}.  That work gives characterizations, decomposition
results, and consequences for pseudo-continuous modules; it also observes
that the $\Cfour$ property of an individual module is preserved under Morita
equivalence.  Subsequent work connected $\Cfour$ and its dual version to
perspective direct summands \citep{AltunEtAl2018}, examined exchange phenomena
\citep{IbrahimYousif2022a,IbrahimYousif2022b}, and developed related uniqueness
conditions for direct complements \citep{IbrahimYousif2023,KeskinEtAl2025}.
Further variations around the $\mathrm C3$--$\Cfour$ hierarchy appear in
\citep{AminEtAl2015,Zhu2023}.

The next stage of the theory concerns modules all of whose submodules are
$\Cfour$.  \citet{IbrahimEidElGuindy2026} call these modules $\Cfourstar$ and
combine $\Cfourstar$ with semi-weak-CS to define strongly $\Cfourstar$
modules.  Their results establish structural and
decomposition properties for these newer classes.  Semi-weak-CS is the
non-finitary part of the definition: it tests the unions arising from chains
of compatible isomorphisms between disjoint semisimple direct summands.
Its transport therefore uses preservation of chain joins in addition to the
finite kernel--image argument used for $\Cfour$.

The standard theory of Morita equivalence explains why direct sums,
subobjects up to isomorphism, exact sequences, and projective generators are
categorical data \citep{AndersonFuller1992,Lam1999}.  Equivalence of the
finitely generated projective subcategories also yields the familiar Morita
invariance of the projective monoid $\Vmon(R)$ \citep{Goodearl2009}.  Morita
theory nevertheless shows that the
regular module is not categorical as a distinguished object: an equivalence
sends it to a finitely generated progenerator, which need not be the regular
module on the other side.

An earlier preprint of the first author stated regular-module Morita
invariance for these conditions
\citep{Gokavarapu2026MoritaPreprint}.  The present article replaces that
statement by an order-unit criterion.  The distinction is that an equivalence
sends $R_R$ to a finitely generated progenerator, which need not correspond
to $S_S$.  The right Ore-domain calculation below gives examples in which
the regular-module property changes within a Morita class.

Two classical structure theories become relevant once this distinction is
made.  A right Ore domain is uniform as a right module, so intersections of
nonzero right ideals provide a noncommutative replacement for the elementary
ideal argument over a commutative domain; see
\citep{GoodearlWarfield2004,McConnellRobson2001}.  At the projective level, the
Steinitz theorem identifies a nonzero finitely generated projective module
over a Dedekind domain with $D^{r-1}\oplus I$, where $I$ is invertible.  Thus
its isomorphism class is determined by its rank and a Picard-class coordinate
\citep{AtiyahMacdonald1969,Weibel2013}.  These facts allow the marked locus to
be calculated outside the free ray detected by matrix rings.

For the torsion side, a separate finite-length classification is available:
over a Dedekind domain, a primary torsion module is $\Cfour$ exactly in the
homocyclic case, and it is $\Cfourstar$ exactly when it is cyclic or
semisimple \citep[Theorem~6.4]{Gokavarapu2026SmithOrbit}.  That result concerns
finite-length torsion modules.  It does not determine how a torsion part
interacts with a nonzero torsion-free part inside a general finitely generated
module.  The mixed problem requires control of arbitrary decompositions and
of maps crossing the torsion--torsion-free boundary.

\subsection*{Problem and scope}

The cited literature supplies the objectwise $\Cfour$ observation and
structural results for the newer conditions.  Objectwise transport, however,
does not compare the two regular modules of Morita-equivalent rings, because
the image of a regular module is generally a progenerator.  It also does not
determine which nonfree finitely generated projectives have the property.
For finitely generated modules over a Dedekind domain, the projective and
finite-torsion cases leave a further question: how the $\Cfour$ tests behave
when the torsion-free and torsion parts are both nonzero.  Accordingly, this
article addresses the following four problems.

\begin{enumerate}[label=(\roman*)]
\item If $F:\Mod\text{-}R\to\Mod\text{-}S$ is an equivalence and $M$ is an
      individual module, do $\Cfourstar$, semi-weak-CS, and strongly
      $\Cfourstar$ pass from $M$ to $F(M)$?
\item Give a criterion for when the regular-module property is constant on a
      Morita class.
\item Determine the projective modules carrying the four conditions in
      classes where the projective monoid can be computed.
\item Relate $\Cfour$ defects of a finitely generated Dedekind module
      $M\cong P\oplus\tau(M)$ to defects of its torsion radical.
\end{enumerate}

\subsection*{Results proved in this article}

The main results are as follows.

\begin{enumerate}[label=(\arabic*)]
\item \Cref{thm:objectwise} proves objectwise transport for $\Cfourstar$,
      semi-weak-CS, and strongly $\Cfourstar$.  In the semi-weak-CS case the
      proof uses the joins of the compatible chains in the defining
      subobject lattice.
\item \Cref{thm:markedV,thm:ordercriterion} show that Morita equivalence
      transports $(\Vmon(R),\Vmon_{\mathcal Q}(R))$ and that the
      regular-module ring property is Morita invariant if and only if the
      marking is constant on order units.  The progenerator, matrix, and
      full-corner formulas follow from the same comparison.  The matrix
      profiles satisfy
      $\mu_{\mathcal Q}(M_m(R))=\{n:mn\in\mu_{\mathcal Q}(R)\}$.
\item \Cref{thm:oredichotomy} computes the four profiles for right Ore
      domains.  For a nondivision right Ore domain the $\Cfour$,
      $\Cfourstar$, and strongly $\Cfourstar$ profiles are $\{1\}$, while
      the semi-weak-CS profile is $\mathbb N_{\geq1}$.
\item \Cref{thm:dedekindlocus,thm:dedekindrigidity} compute the four marked
      loci for a nonfield Dedekind domain and determine the corresponding
      regular-module properties in its Morita class.
\item \Cref{thm:defect-reconstruction} transfers $\Cfour$ defects between a
      rank-at-most-one module and its torsion radical and constructs a defect
      in rank at least two.  This gives the $\Cfour$ and $\Cfourstar$
      torsion-radical criteria in
      \cref{thm:torsion-reduction-c4,thm:torsion-reduction-c4star}.
\item Combining those reductions with the finite-length torsion criterion
      from \citet[Theorem~6.4]{Gokavarapu2026SmithOrbit} gives the
      invariant-factor tests in
      \cref{thm:complete-dedekind-classification}.  The direct-sum criterion
      in \cref{thm:direct-sum-compatibility} records the additional
      compatibility required on common primary support.
\end{enumerate}

The categorical and Morita background is taken from
\citep{AndersonFuller1992,Lam1999,MacLane1998,Mitchell1965}; the language of
essentiality and extending modules may be found in
\citep{DungEtAl1994,ClarkEtAl2006,BirkenmeierParkRizvi2013}.

\section{The four module conditions}

All rings are associative with identity and all modules are unitary right
modules.  For a module $M$, the notation $X\lesum M$ means that $X$ is a direct
summand, while $X\leess M$ means that $X$ is essential in $M$.

\begin{definition}\label{def:c4}
A module $M$ is a \emph{$\Cfour$-module} if, whenever
$M=A\oplus B$ and $f:A\to B$ is a homomorphism with
$\ker f\lesum A$, one has $\im f\lesum B$.
It is a \emph{$\Cfourstar$-module} if every submodule of $M$ is a
$\Cfour$-module.
\end{definition}

This is the standard definition from the $\Cfour$ literature
\citep{DingEtAl2017,IbrahimEidElGuindy2026}.  We next state the chain condition
used in the definition of semi-weak-CS.

\begin{definition}\label{def:scriptS}
Let $M$ be a module.  Write $\mathcal{S}(M)$ for the set of triples
$(X,Y,\varphi)$ such that
\[
 X,Y\lesum M,\qquad X\cap Y=0,
\]
$X$ and $Y$ are semisimple, and $\varphi:X\to Y$ is an isomorphism.  Put
\[
 (X,Y,\varphi)\preccurlyeq (X',Y',\varphi')
\]
when $X\leq X'$, $Y\leq Y'$, and $\varphi'|_X=\varphi$.
For a chain $\mathcal C\subseteq\mathcal{S}(M)$, set
\[
 X_{\mathcal C}=\bigcup_{(X,Y,\varphi)\in\mathcal C}X,
 \qquad
 Y_{\mathcal C}=\bigcup_{(X,Y,\varphi)\in\mathcal C}Y.
\]
\end{definition}

The compatibility in the order gives an induced isomorphism
$\varphi_{\mathcal C}:X_{\mathcal C}\to Y_{\mathcal C}$.  The two unions are
semisimple, disjoint submodules; see the original construction in
\citep{IbrahimEidElGuindy2026}.

\begin{definition}\label{def:swcs}
A module $M$ is \emph{semi-weak-CS} if, for every chain
$\mathcal C\subseteq\mathcal{S}(M)$, there exist direct summands
$A,B\lesum M$ such that
\[
 X_{\mathcal C}\leess A,\qquad Y_{\mathcal C}\leess B.
\]
It is \emph{strongly $\Cfourstar$} if it is both $\Cfourstar$ and
semi-weak-CS.
\end{definition}

\begin{proposition}[The one-pair test is vacuous]\label{prop:pair}
If \cref{def:swcs} is replaced by the corresponding condition on one triple
$(X,Y,\varphi)\in\mathcal{S}(M)$, then every module satisfies the resulting
condition.
\end{proposition}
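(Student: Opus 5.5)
The one-triple condition reads: for every $(X,Y,\varphi)\in\mathcal S(M)$ there exist $A,B\lesum M$ with $X\leess A$ and $Y\leess B$. The plan is to take $A=X$ and $B=Y$. Membership in $\mathcal S(M)$ already requires $X\lesum M$ and $Y\lesum M$, and every module is essential in itself, since any nonzero submodule $N\leq X$ meets $X$ in $N\neq 0$. So the pair $(A,B)=(X,Y)$ witnesses the condition for every triple, in every module $M$.

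Equivalently, the single-triple condition is the special case of \cref{def:swcs} for a one-element chain $\mathcal C=\{(X,Y,\varphi)\}$. In that case $X_{\mathcal C}=X$ and $Y_{\mathcal C}=Y$, so the required summands are supplied by the definition of $\mathcal S(M)$ itself. I would record this reformulation because it shows what the proposition really says. The content of semi-weak-CS lies entirely in infinite chains, where $X_{\mathcal C}$ and $Y_{\mathcal C}$ are unions of summands and need not themselves be summands. This is the point the paper uses later when discussing transport of chain joins.

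There is no real obstacle here. The only care needed is to avoid reading the one-triple version as asking for $A$ and $B$ to be \emph{proper} or \emph{maximal} essential extensions; \cref{def:swcs} imposes no such requirement. The semisimplicity of $X$ and $Y$, the disjointness $X\cap Y=0$, and the isomorphism $\varphi$ play no role in the argument. I would say so explicitly, to emphasize that the condition is vacuous.
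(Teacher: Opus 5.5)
Your proof is correct and is the same argument as the paper's. You take $A=X$ and $B=Y$, which are direct summands by the definition of $\mathcal{S}(M)$ and are trivially essential in themselves.
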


\begin{proof}
For a single triple, take $A=X$ and $B=Y$.  Then $A,B\lesum M$, while
$X\leess A$ and $Y\leess B$.  Hence a one-pair condition has no nontrivial
content; the force of \cref{def:swcs} lies in the unions of chains, which need
not be direct summands.
\end{proof}

\section{Categorical transport}

We record the categorical facts in the precise form needed below.  An
equivalence between module categories is additive and exact; it preserves and
reflects split monomorphisms, kernels, images, finite limits, and finite
colimits.  It also induces an order isomorphism between subobject lattices.
Because an order isomorphism between complete lattices preserves all joins, it
preserves unions of chains of subobjects.  These are standard consequences of
equivalence; see \citep{AndersonFuller1992,Lam1999,MacLane1998}.

\begin{lemma}\label{lem:essential}
Let $F:\Mod\text{-}R\to\Mod\text{-}S$ be an equivalence.  A monomorphism
$u:X\rightarrowtail Y$ is essential if and only if $F(u)$ is essential.
\end{lemma}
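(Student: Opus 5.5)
We reformulate essentiality so that it refers only to data an equivalence preserves and reflects, namely monomorphisms, composites, zero morphisms, and the subobject lattice. Recall that a monomorphism $u:X\rightarrowtail Y$ is essential exactly when, for every morphism $g:Y\to Z$, if $gu$ is a monomorphism then $g$ is a monomorphism. Equivalently, every nonzero subobject of $Y$ meets $\im u$ nontrivially. We will use the first, morphism-theoretic form, and verify the equivalence of the two forms briefly. If $N\leq Y$ satisfies $N\cap\im u=0$, then the quotient map $Y\to Y/N$ restricts to a monomorphism on $X$. So the morphism test forces $N=0$. Conversely, if $\ker g\cap \im u=0$, which is exactly the statement that $gu$ is monic, then $\ker g=0$.

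Now let $F$ be an equivalence with quasi-inverse $G$, and let $\eta:1\cong GF$ be the unit. Since $F$ is an equivalence, it preserves and reflects monomorphisms, because it is exact and faithful, and it is full and essentially surjective. First suppose $u$ is essential, and take any $h:F(Y)\to W$ in $\Mod\text{-}S$ with $hF(u)$ monic. Choose an isomorphism $W\cong F(Z)$ with $Z=G(W)$. By fullness, $h$ corresponds to $F(g)$ for some $g:Y\to Z$, up to composing with this isomorphism. Then $F(gu)=F(g)F(u)$ is monic, so $gu$ is monic by reflection. Essentiality of $u$ gives that $g$ is monic, and hence $F(g)$ and therefore $h$ is monic. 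Thus $F(u)$ is essential.

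The converse follows by applying the same argument to $G$. If $F(u)$ is essential, then $GF(u)$ is essential. Via the natural isomorphism $\eta$, $GF(u)$ is isomorphic as an arrow to $u$, and essentiality is invariant under isomorphism of arrows. Hence $u$ is essential.

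The only delicate step is the bookkeeping with essential surjectivity and fullness: an arbitrary test morphism out of $F(Y)$ must be replaced by one of the form $F(g)$. This is handled by composing with the isomorphism $W\cong FG(W)$, since the property of being a monomorphism is unchanged under composition with isomorphisms. As an alternative, one could use the induced order isomorphism of subobject lattices. That isomorphism preserves meets and the bottom element $0$, because $F$ is exact and $F(0)=0$, so it preserves the condition that every nonzero subobject meets $\im u$.
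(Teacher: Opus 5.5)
Your proof is correct and follows the paper's route. Like the paper, you use the morphism-theoretic characterization of essential monomorphisms, preservation and reflection of monomorphisms and composites, and the quasi-inverse for the converse. You also spell out the step the paper's short proof leaves implicit: using fullness and the isomorphism $W\cong FG(W)$, an arbitrary test morphism out of $F(Y)$ can be replaced by one of the form $F(g)$.
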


\begin{proof}
Use the categorical characterization: $u$ is essential when, for every
$h:Y\to Z$, monicity of $hu$ implies monicity of $h$.  Equivalences preserve
and reflect monomorphisms and composition, and a quasi-inverse gives the reverse
implication.
\end{proof}

\begin{lemma}\label{lem:semisimple}
An equivalence $F:\Mod\text{-}R\to\Mod\text{-}S$ preserves and reflects
semisimple objects, direct summands, zero intersections, and isomorphisms.
It sends a chain $\mathcal C\subseteq\mathcal{S}(M)$ to a chain in
$\mathcal{S}(F(M))$ and identifies $F(X_{\mathcal C})$ and
$F(Y_{\mathcal C})$ with the corresponding subobject joins.
\end{lemma}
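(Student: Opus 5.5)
We will prove \cref{lem:semisimple} one property at a time, using only the categorical facts recorded at the start of this section: $F$ is additive and exact, it preserves and reflects monomorphisms and split monomorphisms, and it induces an order isomorphism $\mathrm{Sub}(M)\to\mathrm{Sub}(F(M))$ for each module $M$. Isomorphisms are preserved by functoriality. They are reflected because $F$ is fully faithful: if $F(\varphi)$ has inverse $\psi$, then $\psi=F(\psi')$ for some $\psi'$, and faithfulness shows that $\psi'$ is inverse to $\varphi$.

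\textbf{Summands and intersections.} A subobject $X\le M$ is a direct summand exactly when the inclusion splits. Equivalently, $M\cong X\oplus X'$ with $X$ and $X'$ the biproduct subobjects. Biproducts are preserved because $F$ is additive, so $F(X)\lesum F(M)$. The quasi-inverse gives the converse. The intersection $X\cap Y$ is the pullback of the two inclusions, which is a finite limit. Since $F$ preserves finite limits, it follows that $F(X\cap Y)=F(X)\cap F(Y)$ in $\mathrm{Sub}(F(M))$. As $F$ preserves and reflects zero objects, one intersection is zero if and only if the other is.

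\textbf{Semisimplicity.} We characterize semisimple modules lattice-theoretically: $N$ is semisimple if and only if every subobject of $N$ is a direct summand. Because $F$ gives an order isomorphism $\mathrm{Sub}(N)\cong\mathrm{Sub}(F(N))$ and preserves and reflects summands, the condition transfers in both directions. This avoids any need to discuss simple objects or infinite direct sums.

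\textbf{Chains.} Let $(X,Y,\varphi)\in\mathcal S(M)$. By the preceding paragraphs, $(F(X),F(Y),F(\varphi))$ lies in $\mathcal S(F(M))$, where $F(X)$ is identified with its image subobject. Suppose $(X,Y,\varphi)\preccurlyeq(X',Y',\varphi')$. Then $X\le X'$ and $Y\le Y'$ transfer by monotonicity of the lattice map. The relation $\varphi'|_X=\varphi$ says $\varphi'\circ\iota=\jmath\circ\varphi$, where $\iota$ and $\jmath$ are the inclusions, and applying $F$ to this equation gives the restriction identity. Since the order-preserving map $\mathcal S(M)\to\mathcal S(F(M))$ sends comparable pairs to comparable pairs, it maps a chain to a chain. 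The union $X_{\mathcal C}$ is the join in $\mathrm{Sub}(M)$ of the directed family of the $X$'s. An order isomorphism of complete lattices preserves arbitrary joins, so $F(X_{\mathcal C})$ corresponds to the join of the $F(X)$, which is $X_{F(\mathcal C)}$; the same holds for $Y$.

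The main obstacle is this last identification: we must check that the subobject isomorphism is compatible with $F$ applied to objects. Concretely, the image of $X_{\mathcal C}\le M$ under the lattice map must equal the subobject $F(X_{\mathcal C})\rightarrowtail F(M)$. This holds because the lattice map is defined exactly by $[u]\mapsto[F(u)]$. We must also check that $F(\varphi_{\mathcal C})$ restricts to each $F(\varphi)$, so that it equals $\varphi_{F(\mathcal C)}$. For this we use that $\varphi_{\mathcal C}$ is the unique map restricting to each $\varphi$. Uniqueness comes from $X_{\mathcal C}$ being the colimit of its directed family of subobjects. We transfer this uniqueness through fullness and faithfulness of $F$: any two maps out of $F(X_{\mathcal C})$ agreeing on every $F(X)$ agree on their join, because the equalizer subobject contains all the $F(X)$.
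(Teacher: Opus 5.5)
Your proposal is correct and follows essentially the same route as the paper. Like the paper, you characterize semisimplicity as ``every subobject is a direct summand,'' summands as split subobjects, intersections as pullbacks, and chain unions as joins in the subobject lattice, and then transport each description through the equivalence. Your version adds detail the paper omits: reflection of isomorphisms via full faithfulness, and the check that $F(\varphi_{\mathcal C})$ agrees with the induced isomorphism on the joins, which the statement does not strictly require.
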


\begin{proof}
A module is semisimple precisely when every subobject inclusion into it splits.
Direct summands are split subobjects, intersections are pullbacks, and the
chain unions are joins in the subobject lattice.  Each description is
preserved and reflected by an equivalence.
\end{proof}

The $\Cfour$ equivalence in the next result is the observation of
\citet{DingEtAl2017}.  We include its proof together with the three other
conditions.  For semi-weak-CS, the required categorical datum is the join of
each compatible chain of subobjects.

\begin{theorem}[Objectwise transport]\label{thm:objectwise}
Let $F:\Mod\text{-}R\to\Mod\text{-}S$ be an equivalence and let
$M\in\Mod\text{-}R$.  Then each of the following equivalences holds:
\begin{align*}
 M\text{ is }\Cfour
   &\Longleftrightarrow F(M)\text{ is }\Cfour,\\
 M\text{ is }\Cfourstar
   &\Longleftrightarrow F(M)\text{ is }\Cfourstar,\\
 M\text{ is semi-weak-CS}
   &\Longleftrightarrow F(M)\text{ is semi-weak-CS},\\
 M\text{ is strongly }\Cfourstar
   &\Longleftrightarrow F(M)\text{ is strongly }\Cfourstar.
\end{align*}
\end{theorem}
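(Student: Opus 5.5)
It suffices to prove each forward implication, since a quasi-inverse $G$ of $F$ is again an equivalence with $G F(M)\cong M$. Each of the four properties is invariant under isomorphism of modules, so applying the forward statement to $G$ and $F(M)$ gives the reverse implications. Throughout we use that $F$ induces an order isomorphism between the subobject lattice of $M$ and that of $F(M)$. It is determined by $U\mapsto$ the image of $F(U)\to F(M)$. Every subobject of $F(M)$ arises this way, and this correspondence carries $F$-images of maps to the corresponding subobjects.

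\emph{Step 1 ($\Cfour$).} Suppose $M$ is $\Cfour$ and $F(M)=A'\oplus B'$ with $g:A'\to B'$ and $\ker g\lesum A'$. By \cref{lem:semisimple} (direct summands and zero intersections are reflected), we get $A'=F(A)$ and $B'=F(B)$ up to the lattice correspondence, with $M=A\oplus B$. Since $F$ is full and faithful, $g=F(f)$ for a unique $f:A\to B$. Exactness gives $F(\ker f)=\ker g$ and $F(\im f)=\im g$. Because split monomorphisms are reflected, $\ker f\lesum A$. The $\Cfour$ hypothesis then gives $\im f\lesum B$, and applying $F$ yields $\im g\lesum B'$.

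\emph{Step 2 ($\Cfourstar$).} Every submodule $N'\leq F(M)$ is isomorphic to $F(N)$ for some $N\leq M$. Step 1 applied to the restricted equivalence gives: $N$ is $\Cfour$ implies $F(N)\cong N'$ is $\Cfour$. The $\Cfour$ property is evidently invariant under isomorphism, which finishes this case.

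\emph{Step 3 (semi-weak-CS).} This is the step requiring care. Let $\mathcal C'$ be a chain in $\mathcal{S}(F(M))$. Pull it back through the lattice isomorphism and fullness of $F$: each $(X',Y',\varphi')$ corresponds to a unique $(X,Y,\varphi)$. Here $X,Y\lesum M$ are semisimple with $X\cap Y=0$, and $\varphi$ is an isomorphism, by \cref{lem:semisimple}. The order $\preccurlyeq$ is preserved because inclusions and restrictions are compositions with monomorphisms, and $F$ is faithful. This produces a chain $\mathcal C$ in $\mathcal{S}(M)$. By \cref{lem:semisimple}, $F(X_{\mathcal C})$ and $F(Y_{\mathcal C})$ are the joins $X'_{\mathcal C'}$ and $Y'_{\mathcal C'}$. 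The essential point is that the join of a chain in the submodule lattice is its union, and that an order isomorphism of complete lattices preserves arbitrary joins.

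Semi-weak-CS of $M$ gives $A,B\lesum M$ with $X_{\mathcal C}\leess A$ and $Y_{\mathcal C}\leess B$. Then $F(A),F(B)\lesum F(M)$, and \cref{lem:essential} gives $X'_{\mathcal C'}\leess F(A)$ and $Y'_{\mathcal C'}\leess F(B)$.

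The main obstacle is justifying that the lattice map $U\mapsto F(U)$ really is a complete-lattice isomorphism, so that infinite unions go to infinite unions. I would argue this directly: an order isomorphism with an order-preserving inverse preserves all suprema, and the subobject lattice of a module is complete.

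\emph{Step 4 (strongly $\Cfourstar$).} This is the conjunction of Steps 2 and 3.
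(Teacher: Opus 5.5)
Your proof is correct and follows the same route as the paper. For $\Cfour$ you use exactness on kernels and images together with preservation and reflection of split subobjects; for $\Cfourstar$ you use the subobject correspondence; and for semi-weak-CS you pull a chain back to $\mathcal{S}(M)$, use that the subobject-lattice isomorphism preserves joins of chains, and transport essentiality via \cref{lem:essential}. Your version is somewhat more explicit than the paper's, notably in using fullness of $F$ to realize an arbitrary decomposition $F(M)=A'\oplus B'$ and map $g$ as $F$ applied to data on $M$, and in checking that the order $\preccurlyeq$ is carried across.
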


\begin{proof}
Suppose $M=A\oplus B$ and $f:A\to B$.  Exactness identifies
$F(\ker f)$ and $F(\im f)$ with the kernel and image of $F(f)$, while split
subobjects are preserved and reflected.  This proves the $\Cfour$ assertion.

For $\Cfourstar$, every subobject of $F(M)$ is, up to isomorphism, the image of
a subobject of $M$ under $F$; apply the first assertion and then a
quasi-inverse.

For semi-weak-CS, let $\mathcal D$ be a chain in
$\mathcal{S}(F(M))$.  Applying a quasi-inverse gives, up to isomorphism, a chain
$\mathcal C$ in $\mathcal{S}(M)$.  By \cref{lem:semisimple}, the two coordinate
unions correspond.  If $M$ is semi-weak-CS, these unions are essential in
direct summands of $M$.  The images of those summands under $F$ are direct
summands of $F(M)$, and essentiality follows from \cref{lem:essential}.  The
converse is symmetric.  The strong assertion is now the conjunction of the
$\Cfourstar$ and semi-weak-CS assertions.
\end{proof}

\begin{remark}\label{rem:notfinite}
The $\Cfour$ condition has finite witnesses, but semi-weak-CS need not.  Its
definition permits arbitrary chains, so its obstruction theory is generally
not finite.  Any reduction from chain obstructions to single pairs or to a
least finite composition length requires an additional chain condition and
cannot be asserted for arbitrary modules.
\end{remark}

\section{The marked projective monoid}

Let $\mathcal Q$ be an isomorphism-invariant module property preserved and
reflected by equivalences of module categories.  In the applications,
$\mathcal Q$ is one of the four properties in \cref{thm:objectwise}.  We now
record not only whether the regular module has $\mathcal Q$, but which finitely
generated projective modules have it.

Recall that $\Vmon(R)$ is the abelian monoid of isomorphism classes $[P]$ of
finitely generated projective right $R$-modules, with
$[P]+[Q]=[P\oplus Q]$ \citep{Goodearl2009}.  Its algebraic preorder is
\[
 x\leq y\quad\Longleftrightarrow\quad
 y=x+z\text{ for some }z\in\Vmon(R).
\]
An element $u\in\Vmon(R)$ is an \emph{order unit} if, for every
$x\in\Vmon(R)$, one has $x\leq nu$ for some $n\geq1$.

\begin{definition}\label{def:markedV}
The \emph{$\mathcal Q$-marked projective locus} of $R$ is
\[
 \Vmon_{\mathcal Q}(R)
   =\{[P]\in\Vmon(R):P_R\text{ has }\mathcal Q\}.
\]
No closure of this subset under the monoid operation is assumed.
\end{definition}

\begin{theorem}[Marked projective-monoid transport]\label{thm:markedV}
Let $F:\Mod\text{-}R\to\Mod\text{-}S$ be an equivalence.  Then $F$ induces
an isomorphism of abelian monoids
\[
 \Vmon(F):\Vmon(R)\longrightarrow\Vmon(S),\qquad
 [P]\longmapsto[F(P)],
\]
and
\[
 \Vmon(F)\bigl(\Vmon_{\mathcal Q}(R)\bigr)
   =\Vmon_{\mathcal Q}(S).
\]
Consequently, the marked pair
$(\Vmon(R),\Vmon_{\mathcal Q}(R))$ is a Morita invariant.
\end{theorem}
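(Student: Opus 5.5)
We first check that $F$ sends finitely generated projective $R$-modules to finitely generated projective $S$-modules. Everything else follows formally. Projectivity is categorical: $P$ is projective exactly when $\Hom(P,-)$ preserves epimorphisms. Since $F$ is an equivalence, it preserves and reflects epimorphisms, and for a quasi-inverse $G$ we have
\[
\Hom_S(F(P),N)\cong\Hom_R(P,G(N))
\]
naturally in $N$. Hence $F(P)$ is projective.

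Finite generation is also categorical. A module $P$ is finitely generated exactly when, whenever $P$ is the union (join) of a directed family of submodules, it equals one of them. The order isomorphism of subobject lattices recorded at the start of the section on categorical transport preserves directed joins and the top element, so $F(P)$ is finitely generated. Alternatively, one can use the compactness description via direct sums: $P$ is finitely generated iff every epimorphism onto $P$ from a direct sum $\bigoplus_i N_i$ already restricts to an epimorphism on a finite subsum. Coproducts are colimits and are preserved by $F$.

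Next, $F$ preserves isomorphism classes, so $[P]\mapsto[F(P)]$ is well defined. Additivity of $F$ gives $F(P\oplus Q)\cong F(P)\oplus F(Q)$ and $F(0)=0$, so $\Vmon(F)$ is a monoid homomorphism. Applying the same argument to a quasi-inverse $G$ gives a homomorphism $\Vmon(G)$. The natural isomorphisms $GF\cong\mathrm{id}$ and $FG\cong\mathrm{id}$ show that $\Vmon(G)\Vmon(F)$ and $\Vmon(F)\Vmon(G)$ are identities, so $\Vmon(F)$ is a monoid isomorphism.

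For the marked loci, take $[P]\in\Vmon_{\mathcal Q}(R)$. Since $\mathcal Q$ is preserved by equivalences, $F(P)$ has $\mathcal Q$, which gives the inclusion $\Vmon(F)(\Vmon_{\mathcal Q}(R))\subseteq\Vmon_{\mathcal Q}(S)$. Conversely, take $[P']\in\Vmon_{\mathcal Q}(S)$. Then $P'\cong F(G(P'))$, and $G(P')$ is a finitely generated projective module which has $\mathcal Q$ because $\mathcal Q$ is preserved by $G$, or equivalently reflected by $F$ and isomorphism-invariant. So $[P']$ lies in the image. The final Morita-invariance statement is just a restatement: an isomorphism of monoids carries one marked subset onto the other. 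For the four conditions, the hypothesis on $\mathcal Q$ is supplied by \cref{thm:objectwise}.

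The only step needing care is the preservation of finite generation. I would use the lattice/directed-join characterization, since the paper has already recorded that the subobject-lattice order isomorphism preserves all joins. The rest is formal.
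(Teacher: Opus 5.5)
Your proof is correct and follows the same route as the paper: restrict $F$ to an equivalence of the finitely generated projective subcategories, obtain the monoid isomorphism with inverse $\Vmon(G)$, and match the marked subsets using that $\mathcal Q$ is preserved and reflected. The only difference is that you justify the preservation of projectivity (via $\Hom_S(F(P),-)\cong\Hom_R(P,G(-))$) and of finite generation (via compactness in the subobject lattice), both of which the paper simply asserts as standard.
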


\begin{proof}
An equivalence preserves and reflects projective objects and finite direct
sums.  It also preserves finite generation: equivalently, it restricts to an
equivalence between the finitely generated projective subcategories.  Thus it
induces the displayed monoid isomorphism, with the isomorphism induced by a
quasi-inverse as inverse.  Since $\mathcal Q$ is preserved and reflected,
$P$ has $\mathcal Q$ exactly when $F(P)$ has $\mathcal Q$, proving the marked
subset assertion.
\end{proof}

\begin{proposition}[Order units and progenerators]\label{prop:orderunit}
For a finitely generated projective module $P_{R}$, the following are
equivalent:
\begin{enumerate}[label=(\roman*)]
\item $[P]$ is an order unit of $\Vmon(R)$;
\item $R_R$ is a direct summand of $(P_{R})^n$ for some $n\geq1$;
\item $P_{R}$ is a progenerator.
\end{enumerate}
\end{proposition}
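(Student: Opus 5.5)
The plan is to prove the cycle (i)$\Rightarrow$(ii)$\Rightarrow$(iii)$\Rightarrow$(i). It uses only the definition of the algebraic preorder on $\Vmon(R)$ and the standard description of a generator: $G_R$ is a generator exactly when $R_R$ is an epimorphic image of some direct sum $G^{(I)}$, and then, since $R_R$ is projective, $R_R$ is a direct summand of $G^{(I)}$.

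For (i)$\Rightarrow$(ii), I apply the order-unit condition to $x=[R_R]$. This gives $n\geq1$ and $z\in\Vmon(R)$ with $n[P]=[R]+z$. Choosing a finitely generated projective $Q$ with $[Q]=z$, we get $P^n\cong R\oplus Q$, so $R_R$ is isomorphic to a direct summand of $P^n$. This is condition (ii), read up to isomorphism as usual.

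For (ii)$\Rightarrow$(iii), $P$ is finitely generated projective by hypothesis. Composing the split projection $P^n\to R_R$ with the fact that $R_R$ generates $\Mod\text{-}R$ shows that $P$ generates as well, so $P$ is a progenerator.

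For (iii)$\Rightarrow$(i), there is a surjection $P^{(I)}\to R_R$. Since $R_R$ is cyclic, generated by $1$, the image of $1$ lies in the image of a finite subsum, so there is already a surjection $P^n\to R_R$. This surjection splits by projectivity of $R_R$, giving $[R]\leq n[P]$. Now let $x=[Q]$ be arbitrary. Since $Q$ is a direct summand of some $R^m$, we have $x\leq m[R]\leq mn[P]$. Here I use that the algebraic preorder is transitive and compatible with addition, both of which are immediate from its definition. Hence $[P]$ is an order unit.

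The only point needing care is the reduction in (iii)$\Rightarrow$(i) from an arbitrary index set $I$ to a finite $n$, which uses finite generation of $R_R$. The rest is bookkeeping in the monoid, so I expect no real obstacle.
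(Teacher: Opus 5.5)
Your proof is correct and follows essentially the same route as the paper: (i)$\Rightarrow$(ii) comes from applying the order-unit condition to $[R]$, and the return to (i) comes from the bound $[X]\leq m[R]\leq mn[P]$. The only differences are cosmetic: the paper phrases (ii)$\Rightarrow$(iii) via the trace ideal of $P$ and simply asserts that a progenerator satisfies (ii), whereas you argue directly from the definition of a generator and supply the reduction from $P^{(I)}$ to a finite $P^n$ using that $R_R$ is cyclic.
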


\begin{proof}
If $[P]$ is an order unit, then $[R]\leq n[P]$ for some $n$, which is exactly
(ii).  Condition (ii) says that the trace of $P$ is all of $R$, so the already
finitely generated projective module $P$ is a generator.  Conversely, a
progenerator satisfies (ii).  If (ii) holds and $X_R$ is finitely generated
projective, then $X$ is a direct summand of $R^m$ for some $m$ and hence of
$(P_{R})^{nm}$.  Therefore $[X]\leq nm[P]$, so $[P]$ is an order unit.
\end{proof}

Thus $[R]$ is one order unit of $\Vmon(R)$, but not a Morita-canonical one.
The image of a regular module under a Morita equivalence may represent a
different order unit.  This is the precise location of the regular-object
obstruction.

\section{Regular modules, progenerators, and order units}

We now compare the regular modules associated with a Morita equivalence.  Let
$\mathcal Q$ denote any module property invariant under equivalences of
module categories.  Each of the four properties in
\cref{thm:objectwise} may be used for $\mathcal Q$.

\begin{definition}
A ring $R$ is a \emph{right $\mathcal Q$-ring} when the regular module $R_R$
has $\mathcal Q$.
\end{definition}

\begin{theorem}[Progenerator formula]\label{thm:progenerator}
Let $P_R$ be a progenerator, put $S=\End_R(P)$, and use the standard
equivalence
\[
 F=\Hom_R(P,-):\Mod\text{-}R\longrightarrow\Mod\text{-}S
\]
with quasi-inverse $G=-\otimes_S P$.  Then
\begin{enumerate}[label=(\alph*)]
\item $S_S$ has $\mathcal Q$ if and only if $P_R$ has $\mathcal Q$;
\item $R_R$ has $\mathcal Q$ if and only if
      $\Hom_R(P,R)_S$ has $\mathcal Q$.
\end{enumerate}
\end{theorem}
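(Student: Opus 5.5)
The plan is to compute $F$ on the two relevant objects and then apply the standing hypothesis that $\mathcal Q$ is preserved and reflected by equivalences, as in \cref{thm:objectwise}.

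For (a), evaluate $F$ on $P$ itself. One has $F(P)=\Hom_R(P,P)=\End_R(P)=S$, and the right $S$-action on $\Hom_R(P,P)$ is precomposition. This action is multiplication in $S$, provided composition is written so that $S$ acts on $P$ from the left, which is the convention making $P$ an $(S,R)$-bimodule and $-\otimes_S P$ meaningful. So $F(P)\cong S_S$ as right $S$-modules. Since $F$ is an equivalence and $\mathcal Q$ is preserved and reflected, $P_R$ has $\mathcal Q$ if and only if $F(P)\cong S_S$ has $\mathcal Q$. Isomorphism invariance of $\mathcal Q$ handles the identification.

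For (b), the same argument applied to $R_R$ gives $F(R)=\Hom_R(P,R)$ with its natural right $S$-structure $(g\cdot s)(p)=g(sp)$. Hence $R_R$ has $\mathcal Q$ if and only if $\Hom_R(P,R)_S$ has $\mathcal Q$. One could note, though it is not needed, that $\Hom_R(P,R)$ is the dual $P^\ast$, which is a progenerator over $S$. Equivalently, $G(S_S)=S\otimes_S P\cong P$, which recovers (a) from the quasi-inverse side.

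The only point needing care is the side conventions: checking that the $S$-module structure on $\Hom_R(P,-)$ is a right structure, and that the identification $\End_R(P)\cong S_S$ respects it. I would also confirm that $F$ is in fact an equivalence with quasi-inverse $G$ when $P$ is a progenerator. That is the standard Morita theorem and I would simply cite it \citep{AndersonFuller1992,Lam1999}. Beyond this bookkeeping, both parts are direct consequences of \cref{thm:objectwise} (or the hypothesis on $\mathcal Q$), so I expect no substantive obstacle.
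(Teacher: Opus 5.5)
Your proposal is correct and matches the paper's proof: identify $F(R_R)=\Hom_R(P,R)_S$ and identify $P_R$ with $S_S$ across the equivalence, then apply the equivalence invariance of $\mathcal Q$. The only difference is that for (a) the paper uses $G(S_S)\cong P_R$ while you compute $F(P_R)\cong S_S$, which is an immaterial variation that you already note yourself.
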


\begin{proof}
There are natural isomorphisms $G(S_S)\cong P_R$ and
$F(R_R)=\Hom_R(P,R)_S$.  Apply equivalence invariance of $\mathcal Q$.
\end{proof}

The theorem shows that a comparison of $R_R$ and $S_S$ needs information about
the two different objects $R_R$ and $P_R$.  Morita equivalence alone supplies
no isomorphism between them.

\begin{corollary}[Matrix formula]\label{cor:matrix}
For every ring $R$ and $n\geq1$,
\[
 M_n(R)_{M_n(R)}\text{ has }\mathcal Q
 \quad\Longleftrightarrow\quad
 R_R^n\text{ has }\mathcal Q.
\]
\end{corollary}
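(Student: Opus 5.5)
The plan is to apply \cref{thm:progenerator}(a) with the progenerator $P_R=R_R^n$. First we check that $R^n_R$ is a progenerator. It is finitely generated and free, hence finitely generated projective. It is a generator because $R_R$ is a direct summand of $R_R^n$ (take the first coordinate), which is condition (ii) of \cref{prop:orderunit} with exponent one. Alternatively, $[R^n]=n[R]\geq[R]$ is an order unit.

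Next we identify the endomorphism ring. There is the standard ring isomorphism
\[
 \End_R(R_R^n)\cong M_n(R),
\]
sending an endomorphism to its matrix of components $\Hom_R(R,R)\cong R$ with respect to the coordinate decomposition. Here $R$ acts on the left of itself by left multiplication, so that $\End_R(R_R)\cong R$ and composition corresponds to matrix multiplication. With the right-module convention the isomorphism is covariant and needs no opposite ring. This is the only step needing care, and only in tracking sides. Once it is settled, $S=\End_R(R^n)$ is identified with $M_n(R)$ as rings. The regular module $S_S$ then corresponds to $M_n(R)_{M_n(R)}$, because $\mathcal Q$ is isomorphism-invariant and transporting the module structure along a ring isomorphism preserves it (equivalently, restriction of scalars along a ring isomorphism is an equivalence of module categories, so it preserves $\mathcal Q$).

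Finally, \cref{thm:progenerator}(a) gives that $S_S$ has $\mathcal Q$ if and only if $P_R=R_R^n$ has $\mathcal Q$, which is the claimed equivalence. The case $n=1$ is trivial and is included. No real obstacle is expected beyond the convention check on $\End_R(R^n)\cong M_n(R)$.
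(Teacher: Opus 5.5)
Your proposal is correct and is the paper's proof: take $P_R=R_R^n$ in \cref{thm:progenerator}(a) and use $\End_R(R_R^n)\cong M_n(R)$. Your additional checks, that $R_R^n$ is a progenerator and that the right-module convention yields $M_n(R)$ rather than its opposite ring, are accurate and fill in details the paper leaves implicit.
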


\begin{proof}
Take $P_R=R_R^n$, for which $\End_R(P)\cong M_n(R)$, in
\cref{thm:progenerator}(a).
\end{proof}

\begin{corollary}[Full-corner formula]\label{cor:corner}
Let $e\in R$ be a full idempotent.  Then, for $S=eRe$,
\[
 S_S\text{ has }\mathcal Q
 \quad\Longleftrightarrow\quad
 eR_R\text{ has }\mathcal Q,
\]
and
\[
 R_R\text{ has }\mathcal Q
 \quad\Longleftrightarrow\quad
 Re_S\text{ has }\mathcal Q.
\]
\end{corollary}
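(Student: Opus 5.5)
The plan is to apply \cref{thm:progenerator} with the progenerator $P_R=eR_R$; both assertions should then follow from the two natural isomorphisms recorded there.

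\textbf{Step 1: $eR_R$ is a progenerator.} It is a direct summand of $R_R$ via $R=eR\oplus(1-e)R$, so it is finitely generated projective. Fullness means $ReR=R$, and the trace ideal of $eR$ is exactly $ReR$. Hence $eR_R$ is a generator. Equivalently, $1=\sum_i a_i e b_i$ exhibits $R_R$ as a direct summand of $(eR)^n$, which is condition (ii) of \cref{prop:orderunit}.

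\textbf{Step 2: identify the endomorphism ring.} The map $eRe\to\End_R(eR)$, $ere\mapsto(ex\mapsto erex)$, is a ring isomorphism. Its inverse sends $\phi$ to $\phi(e)=\phi(e)e\in eRe$, and this is compatible with left multiplication. So $S=eRe$ plays the role of $\End_R(P)$ in \cref{thm:progenerator}.

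\textbf{Step 3: first equivalence.} \Cref{thm:progenerator}(a) gives directly that $S_S$ has $\mathcal Q$ if and only if $eR_R$ has $\mathcal Q$.

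\textbf{Step 4: second equivalence.} By \cref{thm:progenerator}(b), $R_R$ has $\mathcal Q$ if and only if $\Hom_R(eR,R)_S$ has $\mathcal Q$. It remains to identify $\Hom_R(eR,R)\cong Re$ as right $S$-modules. The map is $\phi\mapsto\phi(e)$, which lies in $Re$ because $\phi(e)=\phi(e\cdot e)=\phi(e)e$. The inverse sends $re$ to left multiplication by $re$.

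The right $S$-action on $\Hom_R(eR,R)$ is precomposition: for $s\in S$ viewed as left multiplication by $s$ on $eR$, we have $(\phi s)(e)=\phi(se)=\phi(e\,s)=\phi(e)s$, since $se=s=es$. This matches right multiplication on $Re$. Since $\mathcal Q$ is isomorphism-invariant, the second equivalence follows.

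\textbf{Main obstacle.} The only real care needed is in Step 4: getting the side conventions right, so that the $S$-action on $\Hom_R(P,R)$ induced by $S=\End_R(P)$ acting on $P$ becomes right multiplication on $Re$. Once that bookkeeping is checked, the rest is immediate.
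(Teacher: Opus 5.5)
Your proposal is correct and follows the paper's proof: apply \cref{thm:progenerator} to the progenerator $eR_R$, using $\End_R(eR)\cong eRe$ and $\Hom_R(eR,R)\cong Re$ as right $eRe$-modules. You also supply details the paper leaves implicit, namely the generator property via $ReR=R$ and the check that precomposition by $s\in eRe$ corresponds to right multiplication on $Re$.
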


\begin{proof}
Use the progenerator $eR_R$, the isomorphism
$\End_R(eR)\cong eRe$, and
$\Hom_R(eR,R)\cong Re$ as right $eRe$-modules.
\end{proof}

The marked monoid gives a criterion that separates Morita invariance of
$\Vmon(R)$ from constancy of the marked subset on the order units.

\begin{theorem}[Order-unit criterion]\label{thm:ordercriterion}
The right $\mathcal Q$-ring property is Morita invariant if and only if, for
every ring $R$, membership in $\Vmon_{\mathcal Q}(R)$ is constant on the order
units of $\Vmon(R)$.  Equivalently, for every progenerator $P_R$,
\[
 R_R\text{ has }\mathcal Q
 \quad\Longleftrightarrow\quad
 P_R\text{ has }\mathcal Q.
\]
\end{theorem}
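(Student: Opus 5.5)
We prove the three formulations equivalent: (A) the right $\mathcal Q$-ring property is Morita invariant; (B) for every ring $R$, membership in $\Vmon_{\mathcal Q}(R)$ is constant on the order units of $\Vmon(R)$; (C) for every ring $R$ and every progenerator $P_R$, $R_R$ has $\mathcal Q$ exactly when $P_R$ has $\mathcal Q$. We quantify over all rings throughout, and the argument combines \cref{prop:orderunit} with \cref{thm:progenerator}(a).

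(B)$\Leftrightarrow$(C). By \cref{prop:orderunit}, the order units of $\Vmon(R)$ are exactly the classes $[P]$ of progenerators $P_R$, and $[R]$ is itself an order unit. If (B) holds, then for a progenerator $P$ the classes $[P]$ and $[R]$ are both order units. They therefore lie in $\Vmon_{\mathcal Q}(R)$ simultaneously or not at all, which is (C). Conversely, assume (C) and let $[P]$ and $[P']$ be order units. Both $P$ and $P'$ are progenerators, so each has $\mathcal Q$ if and only if $R_R$ does, and hence membership is constant. Since $\mathcal Q$ is isomorphism-invariant, membership depends only on the class, so nothing further needs checking here.

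(A)$\Leftrightarrow$(C). Assume (A), and let $P_R$ be a progenerator with $S=\End_R(P)$. Then $S$ is Morita equivalent to $R$, so (A) gives that $R_R$ has $\mathcal Q$ if and only if $S_S$ does. By \cref{thm:progenerator}(a), $S_S$ has $\mathcal Q$ if and only if $P_R$ does, which yields (C). For the converse, assume (C) and let $R$ and $S$ be Morita equivalent via an equivalence $F:\Mod\text{-}S\to\Mod\text{-}R$. We use the standard fact from Morita theory \citep{AndersonFuller1992,Lam1999} that $P=F(S_S)$ is a progenerator of $\Mod\text{-}R$ with $S\cong\End_R(P)$. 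This fact also follows from \cref{thm:markedV} together with \cref{prop:orderunit}: $[S]$ is an order unit and $\Vmon(F)$ is a monoid isomorphism, so $\Vmon(F)$ carries order units to order units. Equivalence invariance of $\mathcal Q$ then shows that $S_S$ has $\mathcal Q$ if and only if $P_R$ does. By (C), this holds if and only if $R_R$ has $\mathcal Q$, which proves (A).

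The only delicate point is the direction (A)$\Rightarrow$(C). It must be applied to the specific pair $(R,\End_R(P))$, so it needs the identification of $S_S$ with $P_R$ across the equivalence. That identification is supplied by \cref{thm:progenerator}(a). Apart from this, the proof only unwinds definitions.
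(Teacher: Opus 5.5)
Your proof is correct and follows essentially the paper's route. You get (A)$\Rightarrow$(C) by applying Morita invariance to the pair $(R,\End_R(P))$ together with \cref{thm:progenerator}(a), you get the converse from the progenerator $F(S_S)$, and you translate between progenerators and order units via \cref{prop:orderunit}. Your added derivation that $F(S_S)$ is a progenerator, because $\Vmon(F)$ carries order units to order units, is a valid supplement; it yields only the progenerator property and not $S\cong\End_R(P)$, but you never use the latter.
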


\begin{proof}
Assume first that the right $\mathcal Q$-ring property is Morita invariant.
Let $P_R$ be a progenerator and put $S=\End_R(P)$.  Then $R$ and $S$ are
Morita equivalent, while \cref{thm:progenerator} gives
\[
 S_S\text{ has }\mathcal Q
 \quad\Longleftrightarrow\quad
 P_R\text{ has }\mathcal Q.
\]
Morita invariance compares the left side with $R_R$, so $R_R$ and $P_R$ have
the same $\mathcal Q$-status.  By \cref{prop:orderunit}, this proves constancy
on all order units.

Conversely, let $R$ and $S$ be Morita equivalent and choose an equivalence
$G:\Mod\text{-}S\to\Mod\text{-}R$.  The module $P=G(S_S)$ is a progenerator,
so $[P]$ and $[R]$ are order units.  Constancy on order units and invariance of
$\mathcal Q$ on corresponding objects give
\[
 R_R\text{ has }\mathcal Q
 \Longleftrightarrow P_R\text{ has }\mathcal Q
 \Longleftrightarrow S_S\text{ has }\mathcal Q.
\]
Thus the ring property is Morita invariant.
\end{proof}

\begin{corollary}[Morita-class form]\label{cor:classcriterion}
For a fixed ring $R$, the following are equivalent:
\begin{enumerate}[label=(\roman*)]
\item every ring Morita equivalent to $R$ has the same right
      $\mathcal Q$-status as $R$;
\item membership in $\Vmon_{\mathcal Q}(R)$ is constant on the order units of
      $\Vmon(R)$.
\end{enumerate}
\end{corollary}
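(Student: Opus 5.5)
We prove the two implications separately, following the proof of \cref{thm:ordercriterion} but with the ring $R$ held fixed. The one thing to check is that every ring Morita equivalent to $R$ comes from an order unit of $\Vmon(R)$, and every order unit gives such a ring.

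For (ii)$\Rightarrow$(i), let $S$ be Morita equivalent to $R$, and choose an equivalence $G:\Mod\text{-}S\to\Mod\text{-}R$. Put $P=G(S_S)$. Equivalences preserve progenerators, so $P_R$ is a progenerator. By \cref{prop:orderunit}, both $[P]$ and $[R]$ are order units of $\Vmon(R)$. Hypothesis (ii) then gives that $R_R$ has $\mathcal Q$ if and only if $P_R$ has $\mathcal Q$. Invariance of $\mathcal Q$ under the equivalence $G$ gives that $P_R$ has $\mathcal Q$ if and only if $S_S$ has $\mathcal Q$. Hence $S$ and $R$ have the same right $\mathcal Q$-status.

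For (i)$\Rightarrow$(ii), let $[P]$ be an order unit of $\Vmon(R)$. By \cref{prop:orderunit}, $P_R$ is a progenerator, so $S=\End_R(P)$ is Morita equivalent to $R$. By (i), $S_S$ and $R_R$ have the same $\mathcal Q$-status. By \cref{thm:progenerator}(a), $S_S$ has $\mathcal Q$ exactly when $P_R$ has $\mathcal Q$. Therefore every order unit has the same membership status in $\Vmon_{\mathcal Q}(R)$ as the order unit $[R]$, which is constancy on order units. Membership depends only on the isomorphism class, since $\mathcal Q$ is isomorphism-invariant, so this is well defined on elements of $\Vmon(R)$.

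There is no real obstacle. The only point needing care is the localization to one Morita class. In (i)$\Rightarrow$(ii), the auxiliary ring $\End_R(P)$ must lie in the Morita class of $R$, which the progenerator characterization in \cref{prop:orderunit} provides. In (ii)$\Rightarrow$(i), we use only order units of $\Vmon(R)$ itself and never those of $\Vmon(S)$; comparing through $[R]$ and $[G(S_S)]$ keeps everything inside $\Vmon(R)$.
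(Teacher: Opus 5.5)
Your proposal is correct and follows the paper's proof: for (i)$\Rightarrow$(ii) you pass through $\End_R(P)$ and the progenerator formula, and for (ii)$\Rightarrow$(i) you compare $[R]$ with the order unit $[G(S_S)]$ and then use objectwise invariance of $\mathcal Q$. Your extra remarks, that $[R]$ serves as the reference order unit and that membership is well defined on isomorphism classes, only make explicit what the paper leaves implicit.
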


\begin{proof}
If (i) holds and $P_R$ is a progenerator, apply (i) to $\End_R(P)$ and then
use \cref{thm:progenerator}.  This compares the $\mathcal Q$-status of $P_R$
with that of $R_R$.  Conversely, if (ii) holds and $S$ is Morita equivalent to
$R$, the image in $\Mod\text{-}R$ of $S_S$ under a quasi-inverse is a
progenerator.  Apply (ii) and objectwise invariance of $\mathcal Q$.
\end{proof}

\begin{corollary}[Finite-sum and summand criterion]\label{thm:closurecriterion}
Assume that $\mathcal Q$ is invariant under equivalences and is closed under
finite direct sums and direct summands.  Then the right $\mathcal Q$-ring
property is Morita invariant.
\end{corollary}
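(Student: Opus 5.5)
The plan is to verify the order-unit condition of \cref{thm:ordercriterion} for every ring $R$: for each progenerator $P_R$, we show that $R_R$ has $\mathcal Q$ exactly when $P_R$ has $\mathcal Q$. By \cref{prop:orderunit}, every progenerator is an order unit, and every order unit is a progenerator. So it suffices to compare $[R]$ with an arbitrary order unit $[P]$ of $\Vmon(R)$.

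First suppose $R_R$ has $\mathcal Q$. Closure under finite direct sums gives that $R_R^m$ has $\mathcal Q$ for every $m\geq1$. Because $P$ is finitely generated projective, it is a direct summand of some $R_R^m$. Closure under direct summands then shows that $P_R$ has $\mathcal Q$.

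Conversely, suppose $P_R$ has $\mathcal Q$. Since $P$ is a progenerator, \cref{prop:orderunit}(ii) gives that $R_R$ is a direct summand of $(P_R)^n$ for some $n\geq1$. The module $(P_R)^n$ has $\mathcal Q$ by closure under finite sums, and so $R_R$ has $\mathcal Q$ by closure under summands.

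The two implications together show that membership in $\Vmon_{\mathcal Q}(R)$ is constant on order units. \Cref{thm:ordercriterion} then yields Morita invariance. Alternatively, one can argue directly: if $S$ is Morita equivalent to $R$ and $G$ is an equivalence from $\Mod\text{-}S$ to $\Mod\text{-}R$, then $G(S_S)$ is a progenerator, and equivalence invariance of $\mathcal Q$ transfers its status to $S_S$. There is no real obstacle here. The only point needing care is that "closed under finite direct sums" must be read as including repeated copies of one module, which is what allows the passage from $R_R$ to $R_R^m$ and from $P_R$ to $(P_R)^n$.
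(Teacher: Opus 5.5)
Your proposal is correct and follows the paper's route. You use that a progenerator $P_R$ is a summand of some $R_R^m$ and that $R_R$ is a summand of some $P_R^k$. The two closure hypotheses then give $R_R$ and $P_R$ the same $\mathcal Q$-status, and the order-unit criterion (\cref{thm:ordercriterion}) finishes the argument.
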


\begin{proof}
Let $P_R$ be any progenerator.  It is a direct summand of $R_R^m$ for some
$m$, while $R_R$ is a direct summand of $P_R^k$ for some $k$.  The closure
assumptions therefore give
$R_R$ has $\mathcal Q$ if and only if $P_R$ has $\mathcal Q$.  Apply
\cref{thm:ordercriterion}.
\end{proof}

The $\Cfour$ condition is not closed under finite direct sums in general.  The
next sections record a profile law, determine the profiles over right Ore
domains, and compute the marked loci over Dedekind domains.

\section{Matrix profiles and their scaling law}

\begin{definition}\label{def:profile}
For an equivalence-invariant module property $\mathcal Q$, define its
\emph{matrix profile} at $R$ by
\[
 \mu_{\mathcal Q}(R)=\{n\geq1:R_R^n\text{ has }\mathcal Q\}.
\]
\end{definition}

\begin{proposition}[The free ray in the marked monoid]\label{prop:profile}
For every $R$ and $n\geq1$,
\[
 n\in\mu_{\mathcal Q}(R)
 \quad\Longleftrightarrow\quad
 n[R]\in\Vmon_{\mathcal Q}(R)
 \quad\Longleftrightarrow\quad
 M_n(R)\text{ is a right }\mathcal Q\text{-ring}.
\]
\end{proposition}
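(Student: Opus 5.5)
The plan is to prove the two equivalences separately, since each follows almost immediately from a definition or an earlier result.

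For the first equivalence, I unwind \cref{def:profile} and \cref{def:markedV}. By definition, $n\in\mu_{\mathcal Q}(R)$ means that $R_R^n$ has $\mathcal Q$. The module $R_R^n$ is finitely generated projective. Its class in $\Vmon(R)$ is $[R^n]=[R\oplus\cdots\oplus R]=n[R]$, because addition in $\Vmon(R)$ is induced by direct sum. By definition, $n[R]\in\Vmon_{\mathcal Q}(R)$ means that some representative of the class $n[R]$ has $\mathcal Q$. Since $\mathcal Q$ is assumed isomorphism-invariant, this holds exactly when the particular representative $R_R^n$ has $\mathcal Q$. That gives the first equivalence.

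For the second equivalence, I invoke \cref{cor:matrix} directly. It states that $M_n(R)_{M_n(R)}$ has $\mathcal Q$ if and only if $R_R^n$ has $\mathcal Q$. The left side is precisely the statement that $M_n(R)$ is a right $\mathcal Q$-ring. If one prefers to work from more basic results, the same conclusion comes from \cref{thm:progenerator}(a) applied to the progenerator $P_R=R_R^n$. This requires two standard identifications: the ring isomorphism $\End_R(R^n)\cong M_n(R)$, and the fact that under $\Hom_R(R^n,-)$ the module $R^n$ corresponds to the regular $M_n(R)$-module.

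There is no substantive obstacle. The only points that need care are bookkeeping ones. The membership condition must be read on isomorphism classes, which is why isomorphism invariance of $\mathcal Q$ matters. The identification $\End_R(R^n)\cong M_n(R)$ must respect the right-module conventions: endomorphisms act on the left, so they are given by column-vector matrices and no opposite ring appears. Finally, all three conditions concern the same object, $R_R^n$, so the three equivalent statements form a single chain.
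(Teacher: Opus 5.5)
Your proposal is correct and follows the paper's proof: the first equivalence is the identity $n[R]=[R^n]$ in $\Vmon(R)$, read on isomorphism classes using isomorphism invariance of $\mathcal Q$, and the second is \cref{cor:matrix}.
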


\begin{proof}
The first equivalence is the identity $n[R]=[R^n]$ in $\Vmon(R)$; the second
is \cref{cor:matrix}.
\end{proof}

\begin{theorem}[Matrix-profile scaling]\label{thm:profilescaling}
For every ring $R$ and all $m\geq1$,
\[
 \mu_{\mathcal Q}(M_m(R))
   =\{n\geq1:mn\in\mu_{\mathcal Q}(R)\}.
\]
\end{theorem}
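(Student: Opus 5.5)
The plan is to reduce both sides to statements about free modules and compare them through the standard Morita equivalence attached to $R_R^m$. Put $S=M_m(R)$. By \cref{prop:profile} applied to $S$, we have $n\in\mu_{\mathcal Q}(S)$ exactly when $S_S^n$ has $\mathcal Q$. By the same proposition applied to $R$, we have $mn\in\mu_{\mathcal Q}(R)$ exactly when $R_R^{mn}$ has $\mathcal Q$. So it suffices to show that $S_S^n$ has $\mathcal Q$ if and only if $R_R^{mn}$ has $\mathcal Q$.

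For this I will use the progenerator $P_R=R_R^m$, whose endomorphism ring is $\End_R(P)\cong M_m(R)=S$, as in the proof of \cref{cor:matrix}. The associated equivalence is $F=\Hom_R(P,-):\Mod\text{-}R\to\Mod\text{-}S$, with quasi-inverse $G=-\otimes_S P$, as in \cref{thm:progenerator}. Since $G(S_S)\cong P_R$ and $G$ is additive, it preserves finite direct sums, so
\[
 G(S_S^n)\cong P_R^n\cong R_R^{mn}.
\]
Because $\mathcal Q$ is preserved and reflected by equivalences, $S_S^n$ has $\mathcal Q$ if and only if $R_R^{mn}$ does. This gives the displayed set equality.

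As an alternative, the argument can be phrased through \cref{thm:markedV}. The monoid isomorphism $\Vmon(G)$ sends $n[S]$ to $n[P]=mn[R]$ and carries the marked locus onto the marked locus.

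There is no real obstacle here. The only point needing care is the identification $\End_R(R^m)\cong M_m(R)$ compatibly with the right module structure, so that $G(S_S)\cong R^m_R$ holds. This is the same fact already used in \cref{cor:matrix}.
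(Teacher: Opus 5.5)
Your argument is correct, but it is organized differently from the paper's proof. The paper argues at the level of rings. It applies \cref{prop:profile} once to $M_m(R)$ and once to $R$, and links the two through the canonical ring isomorphism $M_n(M_m(R))\cong M_{nm}(R)$. The only Morita-theoretic input is therefore the one already contained in \cref{cor:matrix}.

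You work at the module level instead. You use the single equivalence $G=-\otimes_S R^m$ attached to the progenerator $R_R^m$, note that $G(S_S^n)\cong R_R^{mn}$, and invoke equivalence invariance of $\mathcal Q$ directly.

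Your route avoids iterated matrix rings. It also makes explicit that the scaling law is the restriction to the free ray of the marked-monoid isomorphism of \cref{thm:markedV}, namely $n[S]\mapsto mn[R]$. The paper's route is shorter on the page because it reuses \cref{prop:profile} as a black box.

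One minor point: the equivalence $n\in\mu_{\mathcal Q}(S)\Leftrightarrow S_S^n$ has $\mathcal Q$ is the definition of the profile, not \cref{prop:profile}.
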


\begin{proof}
For $n\geq1$, \cref{prop:profile} and the canonical matrix-ring isomorphism
give
\begin{align*}
n\in\mu_{\mathcal Q}(M_m(R))
&\Longleftrightarrow
M_n(M_m(R))\text{ is a right }\mathcal Q\text{-ring}\\
&\Longleftrightarrow
M_{nm}(R)\text{ is a right }\mathcal Q\text{-ring}\\
&\Longleftrightarrow nm\in\mu_{\mathcal Q}(R).
\end{align*}
\end{proof}

\begin{corollary}\label{cor:profileconsequences}
For every $m\geq1$,
\[
 M_m(R)\text{ is a right }\mathcal Q\text{-ring}
 \quad\Longleftrightarrow\quad
 m\in\mu_{\mathcal Q}(R).
\]
Moreover, all matrix rings over $R$ are right $\mathcal Q$-rings if and only
if $\mu_{\mathcal Q}(R)=\mathbb N_{\geq1}$.
\end{corollary}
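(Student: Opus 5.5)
The statement to prove is \cref{cor:profileconsequences}. Both assertions should follow directly from \cref{prop:profile}, which is already stated for every ring $R$ and every $n\geq1$.

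For the first assertion, fix $m\geq1$ and apply \cref{prop:profile} with $n=m$. The outer equivalence reads $m\in\mu_{\mathcal Q}(R)$ if and only if $M_m(R)$ is a right $\mathcal Q$-ring, and that is exactly the claim. Unwinding this, the route is: by \cref{def:profile}, $m\in\mu_{\mathcal Q}(R)$ means $R_R^m$ has $\mathcal Q$; by \cref{cor:matrix}, this holds if and only if $M_m(R)_{M_m(R)}$ has $\mathcal Q$. As a consistency check, \cref{thm:profilescaling} with $n=1$ gives $1\in\mu_{\mathcal Q}(M_m(R))$ if and only if $m\in\mu_{\mathcal Q}(R)$. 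Since $1\in\mu_{\mathcal Q}(M_m(R))$ just says that the regular module of $M_m(R)$ has $\mathcal Q$, this is the same statement.

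For the second assertion, I quantify the first over all $m\geq1$. If every $M_m(R)$ with $m\geq1$ is a right $\mathcal Q$-ring, then every $m\geq1$ lies in $\mu_{\mathcal Q}(R)$. Since $\mu_{\mathcal Q}(R)\subseteq\mathbb N_{\geq1}$ by definition, this gives $\mu_{\mathcal Q}(R)=\mathbb N_{\geq1}$. The converse runs the same equivalence in the other direction. The phrase ``all matrix rings over $R$'' should be read as all $M_m(R)$ with $m\geq1$; the case $m=1$ is $R$ itself via $M_1(R)\cong R$.

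I expect no real obstacle. The only point needing care is that the matrix-ring identification in \cref{cor:matrix} rests on \cref{thm:progenerator} applied to the progenerator $R_R^m$ with $\End_R(R^m)\cong M_m(R)$. That corollary is already established earlier in the paper, so it can be used as stated.
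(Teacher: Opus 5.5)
Your proposal is correct and is the paper's proof: the first assertion is \cref{prop:profile} with $n=m$, and the second follows by quantifying over all $m\geq1$. The consistency check through \cref{thm:profilescaling} is valid but not needed.
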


\begin{proof}
The first statement is \cref{prop:profile}.  The second follows by quantifying
the first over $m$.
\end{proof}

\section{The right Ore-domain dichotomy}

We next compute the profiles for right Ore domains.  This class contains all
commutative domains and many standard noncommutative domains.  A domain $R$ is called a
\emph{right Ore domain} when $aR\cap bR\neq0$ for all nonzero $a,b\in R$.
Equivalently, the nonzero elements satisfy the right Ore condition.  The
standard localization background may be found in
\citep{GoodearlWarfield2004,McConnellRobson2001}.

\begin{lemma}\label{lem:indecomp}
Every indecomposable module is a $\Cfour$-module.
\end{lemma}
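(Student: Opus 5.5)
The plan is a direct check of \cref{def:c4}, using only the shape of the possible decompositions. Let $M$ be indecomposable, suppose $M=A\oplus B$, and let $f:A\to B$ be a homomorphism with $\ker f\lesum A$. The aim is to show $\im f\lesum B$. I will follow the usual convention that indecomposable modules are nonzero; if $M=0$ the condition holds trivially, so nothing is lost.

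Indecomposability leaves only two possible decompositions, and I will treat them separately. If $A=0$, then $f=0$, so $\im f=0$, which is a direct summand of $B$. If $B=0$, then $\im f\leq B=0$, so again $\im f=0\lesum B$. In both cases the hypothesis $\ker f\lesum A$ is never used: the condition holds automatically because either the domain or the codomain of $f$ is zero.

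No step here is a real obstacle. The only point needing care is to record explicitly that an indecomposable module admits only the trivial decompositions $M\oplus0$ and $0\oplus M$, so that the argument covers every decomposition quantified over in \cref{def:c4}. It is also worth noting in the text that this lemma will be applied to right Ore domains. A right Ore domain $R$ is uniform as a right module, so $R_R$ is indecomposable, and hence $R_R$ is $\Cfour$ by this lemma.
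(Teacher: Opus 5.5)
Your proposal is correct and follows the same approach as the paper: indecomposability forces $A=0$ or $B=0$ in any decomposition $M=A\oplus B$, so $\im f=0$ and the image condition of \cref{def:c4} holds automatically. Your explicit treatment of the two cases and of the zero module only spells out what the paper's one-line proof calls ``automatic.''
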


\begin{proof}
If $M=A\oplus B$ and $M$ is indecomposable, then $A=0$ or $B=0$.  The image
condition in \cref{def:c4} is therefore automatic.
\end{proof}

\begin{lemma}\label{lem:zero-socle}
If $\soc(M)=0$, then $M$ is semi-weak-CS.
\end{lemma}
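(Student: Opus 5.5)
The plan is to show that when $\soc(M)=0$ the set $\mathcal S(M)$ contains only trivial triples, so every chain has zero unions, and then to choose the summands $A=B=0$.

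First, take $(X,Y,\varphi)\in\mathcal S(M)$. By \cref{def:scriptS}, $X$ is a semisimple submodule of $M$. A semisimple module is the sum of its simple submodules, and every simple submodule of $X$ is a simple submodule of $M$. Hence $X\leq\soc(M)=0$, so $X=0$. The same argument gives $Y=0$, and then $\varphi$ is the zero map. Thus $\mathcal S(M)=\{(0,0,0)\}$; this set is nonempty, since $0$ is a semisimple direct summand of $M$ and $0\cap0=0$.

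Second, let $\mathcal C\subseteq\mathcal S(M)$ be a chain. If $\mathcal C$ is nonempty, it is $\{(0,0,0)\}$. If $\mathcal C$ is empty, the unions are empty unions, which by the usual convention in the subobject lattice are the zero submodule. Either way $X_{\mathcal C}=Y_{\mathcal C}=0$.

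Third, set $A=B=0$. These are direct summands of $M$, because $M=0\oplus M$. Moreover $0\leess 0$ holds trivially, since the zero module has no nonzero submodules to test. Hence the condition of \cref{def:swcs} is satisfied for every chain, and $M$ is semi-weak-CS.

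There is no real obstacle in this argument. The only points that need care are the empty-chain convention and the observation that the zero module is essential in itself, and both hold trivially.
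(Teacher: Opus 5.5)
Your proof is correct and matches the paper's argument: the zero socle forces every triple in $\mathcal S(M)$ to be the zero triple, so every chain has zero coordinate unions, and these are essential in the zero direct summand. Your remark on the empty chain is a harmless refinement that the paper leaves implicit.
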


\begin{proof}
There are no nonzero semisimple submodules of $M$.  Hence every member of
$\mathcal{S}(M)$ is the zero triple and every chain has zero coordinate unions.
Both are essential in the zero direct summand.
\end{proof}

\begin{lemma}\label{lem:orestructure}
Let $R$ be a right Ore domain that is not a division ring.  Then:
\begin{enumerate}[label=(\alph*)]
\item $R_R$ is uniform, and every nonzero right ideal of $R$ is
      indecomposable;
\item $\soc(R^n)=0$ for every $n\geq1$.
\end{enumerate}
\end{lemma}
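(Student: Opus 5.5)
Part (a) follows directly from the right Ore condition.  Let $I,J$ be nonzero right ideals of $R$ and choose nonzero $a\in I$, $b\in J$.  The Ore condition gives $aR\cap bR\neq0$, hence $I\cap J\neq0$, so $R_R$ is uniform.  Every nonzero submodule of a uniform module is again uniform.  A uniform module $N$ is indecomposable: if $N=A\oplus B$ with $A,B\neq0$, then $A\cap B=0$ would contradict uniformity.  Thus every nonzero right ideal is indecomposable.  (In particular, by \cref{lem:indecomp}, every nonzero right ideal is then a $\Cfour$-module; this is the use foreseen later.)

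For part (b), I first show that $\soc(R_R)=0$.  Suppose $S$ is a simple right ideal, and take $0\neq s\in S$, so that $S=sR$.  Since $R$ is not a division ring, there is a nonzero nonunit $r\in R$.  Because $R$ is a domain, $sr\neq0$, so $srR$ is a nonzero submodule of the simple module $sR$.  Hence $srR=sR$, and $s=srt$ for some $t$.  Cancellation in the domain gives $1=rt$, so $r$ has a right inverse.

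The main obstacle is upgrading this one-sided conclusion to invertibility of $r$.  I would handle it as follows.  From $rt=1$ we get $r(tr-1)=rtr-r=0$, so $tr=1$ since $R$ is a domain.  Hence $r$ is a unit, a contradiction, and therefore $\soc(R_R)=0$.  (Equivalently: applying the argument to every nonzero $r$ would make $R$ a division ring.)

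Finally, the socle commutes with finite direct sums: $\soc(R^n)=\soc(R)^n=0$.  Alternatively, any simple submodule of $R^n$ has a nonzero projection to some coordinate, and that projection embeds it as a simple right ideal of $R$, which we have just excluded.  This completes (b).  Combined with \cref{lem:zero-socle}, it shows that every $R^n$ is semi-weak-CS, which is the semi-weak-CS half of the profile computation that follows.
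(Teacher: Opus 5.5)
Your proof is correct and follows essentially the same route as the paper. Part (a) is identical. For (b), the paper works directly in $R^n$: a nonzero $s$ has zero annihilator, so $sR\cong R_R$, and this would force $R_R$ to be simple. You instead first rule out simple right ideals using a nonunit $r$, then reduce $R^n$ to $R$ by projection. Your explicit upgrade of $rt=1$ to a two-sided inverse fills in a step the paper states only as ``hence be a unit.''
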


\begin{proof}
If $I$ and $J$ are nonzero right ideals, choose nonzero $a\in I$ and
$b\in J$.  The right Ore condition gives
$0\neq aR\cap bR\subseteq I\cap J$.  Hence $R_R$ is uniform.  Every nonzero
submodule of a uniform module is uniform and therefore indecomposable, proving
(a).

For (b), suppose that $0\neq S\leq R^n$ is simple and choose
$0\neq s=(s_1,\ldots,s_n)\in S$.  At least one coordinate is nonzero, and the
domain hypothesis gives $\operatorname{ann}_R(s)=0$.  Thus $sR\cong R_R$.
Since $sR=S$ is simple, $R_R$ would be simple.  Every nonzero $a\in R$ would
then satisfy $aR=R$ and hence be a unit, contrary to the assumption that $R$
is not a division ring.
\end{proof}

\begin{proposition}[The regular-element obstruction]\label{prop:oreobstruction}
Let $R$ be a right Ore domain that is not a division ring.  Then $R^n$ is not
a $\Cfour$-module for every $n\geq2$.
\end{proposition}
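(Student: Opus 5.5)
Take a nonzero nonunit $a\in R$; one exists because $R$ is a domain that is not a division ring. Split $R^n=A\oplus B$ with $A=R$ the first coordinate and $B=R^{n-1}$ the remaining coordinates. Define $f:A\to B$ by $f(r)=(ar,0,\ldots,0)$, which is left multiplication by $a$ into the first coordinate of $B$; this needs $n\geq2$. Since $R$ is a domain, $f$ is injective, so $\ker f=0$ is a direct summand of $A$. If $R^n$ were $\Cfour$, the image $\im f=aR\oplus 0$ would be a direct summand of $B=R\oplus R^{n-2}$. The proof therefore reduces to showing that $aR\oplus0$ is not a summand of $R^{n-1}$.

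Suppose $R^{n-1}=(aR\oplus0)\oplus C$. Intersect with the first coordinate copy $R\oplus 0$. By the modular law, $R\oplus0=(aR\oplus0)\oplus\bigl(C\cap(R\oplus0)\bigr)$. Hence $aR$ is a direct summand of $R_R$. By \cref{lem:orestructure}(a), $R_R$ is uniform, hence indecomposable, so $aR=0$ or $aR=R$. The first is impossible since $a\neq0$, so $aR=R$.

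The main obstacle is that $aR=R$ only makes $a$ right invertible, and we need $a$ to be a unit. Write $ab=1$. Then $a(ba-1)=aba-a=0$, and since $R$ is a domain with $a\neq0$, we get $ba=1$. So $a$ is a unit, contradicting the choice of $a$. Thus $\im f$ is not a summand of $B$ while $\ker f$ is a summand of $A$, and $R^n$ fails $\Cfour$ for every $n\geq2$.
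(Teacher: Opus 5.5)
Your proof is correct and follows the paper's argument. It uses the same splitting $R^n=R\oplus R^{n-1}$, the same injective map $r\mapsto(ar,0,\ldots,0)$, and the same appeal to uniformity of $R_R$ and the domain property to upgrade $aR=R$ to invertibility of $a$. The only difference is how you reach $aR\lesum R_R$: you apply the modular law inside the first coordinate, whereas the paper notes that $B/e_2aR\cong R/aR\oplus R^{n-2}$ would be projective, so $0\to aR\to R\to R/aR\to0$ would split; your route is slightly more direct and avoids projectivity.
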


\begin{proof}
Choose a nonzero nonunit $a\in R$.  For $n\geq2$, write
\[
 R^n=A\oplus B,
 \qquad A=e_1R,
 \qquad B=e_2R\oplus\cdots\oplus e_nR,
\]
and define $f:A\to B$ by $f(e_1r)=e_2ar$.  Since $R$ is a domain,
$\ker f=0\lesum A$.  If $e_2aR$ were a direct summand of $B$, the quotient
\[
 B/e_2aR\cong R/aR\oplus R^{n-2}
\]
would be projective; hence its direct summand $R/aR$ would be projective.  The
sequence $0\to aR\to R\to R/aR\to0$ would split, making the nonzero right
ideal $aR$ a direct summand of the uniform module $R_R$.  Thus $aR=R$, so
$a$ has a right inverse.  Since $a$ is right regular, that inverse is
two-sided, contradicting the choice of $a$.  Therefore
$\operatorname{Im}f=e_2aR\not\lesum B$, and $R^n$ is not $\Cfour$.
\end{proof}

\begin{theorem}[Right Ore-domain profile dichotomy]\label{thm:oredichotomy}
Let $R$ be a right Ore domain.
\begin{enumerate}[label=(\roman*)]
\item If $R$ is a division ring, then
\[
 \mu_{\Cfour}(R)=\mu_{\Cfourstar}(R)
 =\mu_{\mathrm{semi\text{-}weak\text{-}CS}}(R)
 =\mu_{\mathrm{strong}\ \Cfourstar}(R)
 =\mathbb N_{\geq1}.
\]
\item If $R$ is not a division ring, then
\[
 \mu_{\Cfour}(R)=\mu_{\Cfourstar}(R)
 =\mu_{\mathrm{strong}\ \Cfourstar}(R)=\{1\},
 \qquad
 \mu_{\mathrm{semi\text{-}weak\text{-}CS}}(R)
 =\mathbb N_{\geq1}.
\]
\end{enumerate}
\end{theorem}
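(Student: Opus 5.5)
Both parts reduce to the profile description in \cref{def:profile}, so the plan is to check, for each $n\geq1$, whether $R_R^n$ has the relevant property.

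\emph{Part (i), $R$ a division ring.} Every right $R$-module is semisimple, so every submodule of $R^n$ is a direct summand. The $\Cfour$ condition then holds for every submodule $N$ of $R^n$: given $N=A\oplus B$ and $f:A\to B$, the image $\im f$ is a submodule of the semisimple module $B$, hence a summand. This gives $\Cfourstar$ and therefore $\Cfour$.

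For semi-weak-CS, take a chain $\mathcal C\subseteq\mathcal S(R^n)$. The unions $X_{\mathcal C}$ and $Y_{\mathcal C}$ are submodules of a semisimple module. We can therefore take $A=X_{\mathcal C}$ and $B=Y_{\mathcal C}$; each is a summand and trivially essential in itself. Conjoining the two properties gives strong $\Cfourstar$. So all four profiles equal $\mathbb N_{\geq1}$.

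\emph{Part (ii), semi-weak-CS profile.} By \cref{lem:orestructure}(b), $\soc(R^n)=0$ for every $n$. \Cref{lem:zero-socle} then makes every $R^n$ semi-weak-CS, so this profile is $\mathbb N_{\geq1}$.

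\emph{Part (ii), the three $\Cfour$-type profiles.} For $n\geq2$, \cref{prop:oreobstruction} shows $R^n$ is not $\Cfour$. Since $R^n$ is a submodule of itself, it is not $\Cfourstar$, and hence not strongly $\Cfourstar$. This excludes every $n\geq2$ from all three profiles.

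It remains to place $n=1$ in each profile. By \cref{lem:orestructure}(a), every submodule of $R_R$ is either $0$ or a nonzero right ideal, which is indecomposable. By \cref{lem:indecomp}, every submodule is then $\Cfour$, so $R_R$ is $\Cfourstar$ and in particular $\Cfour$. Since $R_R$ is also semi-weak-CS by the socle argument above, it is strongly $\Cfourstar$. Hence all three profiles equal $\{1\}$.

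\emph{Main difficulty.} The only nontrivial ingredient is the obstruction at $n\geq2$, and it is already supplied by \cref{prop:oreobstruction}. What remains is careful bookkeeping: the zero module and zero triples must be treated as trivially satisfying each condition. With that checked, the theorem is assembled entirely from the earlier lemmas.
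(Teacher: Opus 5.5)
Your proposal is correct and follows the paper's proof essentially step for step. In the division-ring case you use semisimplicity of $R^n$; your remark that $\im f$ is a summand of the semisimple module $B$ simply spells out the paper's ``every submodule is a direct summand and a $\Cfour$-module.'' In the nondivision case you combine \cref{lem:orestructure}, \cref{lem:indecomp}, \cref{lem:zero-socle}, and \cref{prop:oreobstruction} exactly as the paper does.
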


\begin{proof}
If $R$ is a division ring, every $R^n$ is semisimple.  Every submodule is
therefore a direct summand and a $\Cfour$-module, so $R^n$ is $\Cfourstar$.
For a chain in $\mathcal{S}(R^n)$, its two coordinate unions are subspaces and
hence direct summands; each is essential in itself.  Thus $R^n$ is also
semi-weak-CS and
strongly $\Cfourstar$.

Now suppose that $R$ is not a division ring.  Every nonzero right ideal of
$R$ is indecomposable by \cref{lem:orestructure}.  Together with
\cref{lem:indecomp}, this proves that $R_R$ is $\Cfourstar$.  The same lemma
gives $\soc(R^n)=0$ for every $n$, so \cref{lem:zero-socle} shows that all
$R^n$ are semi-weak-CS.  Hence $R_R$ is strongly $\Cfourstar$.  Finally,
\cref{prop:oreobstruction} shows that no $R^n$ with $n\geq2$ is $\Cfour$, and
therefore none is $\Cfourstar$ or strongly $\Cfourstar$.  The displayed
profiles follow.
\end{proof}

\begin{corollary}[Commutative and Noetherian specializations]
The conclusion of \cref{thm:oredichotomy} holds for every commutative domain
and for every right Noetherian domain.
\end{corollary}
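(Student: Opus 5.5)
The plan is to reduce the corollary directly to \cref{thm:oredichotomy}: we only need to check that each of the two named classes consists of right Ore domains. Once that is done, both parts (i) and (ii) of the theorem apply verbatim. Case (i) covers the fields, respectively the right Noetherian division rings. Case (ii) covers the remaining domains.

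For a commutative domain $R$ the check is immediate. Given nonzero $a,b\in R$, the product $ab=ba$ is nonzero because $R$ is a domain, and it lies in $aR\cap bR$. Hence $aR\cap bR\neq0$, which is exactly the defining condition stated before \cref{lem:indecomp}.

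For a right Noetherian domain $R$ I will use the standard argument, which is also Goldie's observation. Suppose $a,b\neq0$ and $aR\cap bR=0$. Consider the right ideals
\[
 bR,\quad abR,\quad a^2bR,\quad\ldots .
\]
I claim their sum is direct. If $\sum_{i=0}^{k} a^i b r_i=0$ with $r_0\neq0$ chosen minimal in the evident sense, then $br_0=-a\bigl(\sum_{i\geq1}a^{i-1}br_i\bigr)$ lies in $bR\cap aR=0$. So $br_0=0$, and since $R$ is a domain this forces $r_0=0$. The remaining relation becomes $a\bigl(\sum_{i\geq1} a^{i-1}br_i\bigr)=0$, and cancelling $a$ (again using that $R$ is a domain) lets us induct down. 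Each $a^ibR$ is nonzero, because $a^ib\neq0$ in a domain. We therefore obtain an infinite direct sum of nonzero right ideals, and hence a strictly ascending chain $\bigoplus_{i\le k}a^ibR$. This contradicts the right Noetherian hypothesis. So $aR\cap bR\neq0$, and $R$ is right Ore.

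The only step requiring care is this independence argument in the Noetherian case; it is also classical and could simply be cited from \citep{GoodearlWarfield2004,McConnellRobson2001}. With both classes shown to be right Ore domains, the conclusion follows from \cref{thm:oredichotomy}.
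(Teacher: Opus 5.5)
Your proposal is correct and follows the same route as the paper: both reduce to \cref{thm:oredichotomy} by checking that commutative domains and right Noetherian domains are right Ore. The only difference is that the paper cites the Goldie--Ore theorem for the Noetherian case, whereas you prove it directly with the standard argument (if $aR\cap bR=0$, the sum $\sum_i a^ibR$ is direct, contradicting the ascending chain condition), which is valid; the phrase about choosing $r_0\neq0$ minimal is superfluous, since the argument is simply an induction on $k$.
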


\begin{proof}
Every commutative domain is right Ore.  A right Noetherian domain is right
Ore by the Goldie--Ore theorem; see \citep{GoodearlWarfield2004}.
\end{proof}

\begin{corollary}[An infinite counterexample family]\label{cor:orefamily}
Let $R$ be any right Ore domain that is not a division ring.  For every
$m\geq2$, the Morita-equivalent rings $R$ and $M_m(R)$ have different right
$\Cfour$, right $\Cfourstar$, and strongly right $\Cfourstar$ status.  Every
matrix ring $M_m(R)$ is, however, a right semi-weak-CS ring.
\end{corollary}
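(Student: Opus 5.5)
The corollary is a direct reading of \cref{thm:oredichotomy}(ii) through the matrix-profile formalism, so the proof will simply combine that theorem with \cref{prop:profile} (equivalently \cref{cor:profileconsequences}). Fix a right Ore domain $R$ that is not a division ring, and let $\mathcal Q$ range over the four properties; each is equivalence-invariant by \cref{thm:objectwise}, so the results of the profile section apply.

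First, Morita equivalence of $R$ and $M_m(R)$ is standard: $R_R^m$ is a progenerator with $\End_R(R^m)\cong M_m(R)$, so \cref{thm:progenerator} applies. Next, for $\mathcal Q\in\{\Cfour,\Cfourstar,\text{strongly }\Cfourstar\}$, \cref{thm:oredichotomy}(ii) gives $\mu_{\mathcal Q}(R)=\{1\}$. Hence $1\in\mu_{\mathcal Q}(R)$, so $R$ is a right $\mathcal Q$-ring. By \cref{cor:profileconsequences}, $M_m(R)$ is a right $\mathcal Q$-ring exactly when $m\in\mu_{\mathcal Q}(R)$, and this fails for every $m\geq2$. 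So the two rings have different status for each of these three properties. Concretely, this is \cref{prop:oreobstruction} applied to $R^m$ combined with \cref{cor:matrix}; no new computation is needed.

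For semi-weak-CS, \cref{thm:oredichotomy}(ii) gives profile $\mathbb N_{\geq1}$. The second clause of \cref{cor:profileconsequences} then shows that every $M_m(R)$ is a right semi-weak-CS ring. One may optionally note, via \cref{thm:profilescaling}, that $\mu(M_m(R))=\mathbb N_{\geq1}$ as well.

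There is no real obstacle here. The only points to check are that the strongly $\Cfourstar$ clause uses the full profile $\{1\}$ rather than merely the $\Cfour$ failure, and that $m\geq2$ is exactly what \cref{prop:oreobstruction} requires. Both are already built into \cref{thm:oredichotomy}.
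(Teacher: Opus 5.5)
Your proposal is correct and matches the paper's proof, which likewise just combines \cref{prop:profile} with \cref{thm:oredichotomy}(ii). The profile $\{1\}$ separates $R$ from $M_m(R)$ for every $m\geq2$ for the three $\Cfour$-type properties, and the full profile $\mathbb N_{\geq1}$ makes every $M_m(R)$ a right semi-weak-CS ring.
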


\begin{proof}
Combine \cref{prop:profile,thm:oredichotomy}.
\end{proof}

\begin{corollary}[Regular-module consequence]
Objectwise equivalence invariance of a module property does not, even for a
summand-theoretic property, imply Morita invariance of the associated
regular-module ring property.
\end{corollary}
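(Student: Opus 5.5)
The statement is a negative assertion, so the plan is to exhibit one explicit property that is objectwise equivalence-invariant and summand-theoretic, but whose regular-module ring property changes inside a Morita class. I would take $\mathcal Q=\Cfour$. This property is defined purely through direct-sum decompositions and the splitting of images, so it is summand-theoretic in the intended sense. By \cref{thm:objectwise}, it is preserved and reflected by every equivalence $F:\Mod\text{-}R\to\Mod\text{-}S$. The same choice works equally well for $\Cfourstar$ or strongly $\Cfourstar$.

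Next, I would fix a concrete right Ore domain that is not a division ring, for instance $R=\mathbb Z$ or $R=k[x]$. By \cref{thm:oredichotomy}(ii), $\mu_{\Cfour}(R)=\{1\}$. Hence $R_R$ is $\Cfour$, while $R_R^2$ is not $\Cfour$; the latter failure is witnessed directly by \cref{prop:oreobstruction}. By \cref{prop:profile}, equivalently \cref{cor:matrix}, the ring $M_2(R)$ is therefore not a right $\Cfour$-ring.

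The rings $R$ and $M_2(R)$ are Morita equivalent through the progenerator $R_R^2$, by \cref{thm:progenerator} with $P=R^2$. So one Morita class contains a right $\Cfour$-ring and a ring that is not right $\Cfour$. This is exactly the content of \cref{cor:orefamily} with $m=2$, and it shows that the right $\Cfour$-ring property is not Morita invariant. Reading this through \cref{thm:ordercriterion}, the marking $\Vmon_{\Cfour}(R)$ contains the order unit $[R]$ but omits the order unit $2[R]$, so it is not constant on order units.

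I do not expect a real obstacle, since the mathematical work is already contained in \cref{prop:oreobstruction}. The only point requiring care is the precise meaning of ``summand-theoretic''. I would settle this by noting that the defining condition of $\Cfour$ refers only to decompositions $M=A\oplus B$ and to the splitting of kernels and images. Thus the counterexample lies within the class the corollary refers to.
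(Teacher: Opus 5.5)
Your proposal is correct and follows the paper's own argument. The paper's proof simply combines \cref{thm:objectwise} with \cref{cor:orefamily} for any nondivision right Ore domain, which is exactly your choice of $\mathcal Q=\Cfour$ with the Morita-equivalent pair $R$ and $M_2(R)$; your order-unit reading via \cref{thm:ordercriterion} is a correct additional gloss.
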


\begin{proof}
Use any nondivision right Ore domain in
\cref{thm:objectwise,cor:orefamily}.
\end{proof}

\section{Dedekind marked loci and Morita rigidity}

The matrix profile records only the free ray
$\{n[D]:n\geq1\}$ in $\Vmon(D)$.  For a Dedekind domain the Steinitz theorem
makes it possible to calculate the marked subset on every finitely generated
projective.  We use the classical form: if $0\neq P_D$ is finitely generated
projective of rank $r$, then
\[
 P\cong D^{r-1}\oplus I
\]
for an invertible ideal $I$, whose class in $\operatorname{Pic}(D)$ is uniquely
determined by $P$.  Consequently,
\[
 \Vmon(D)\cong
 \{0\}\cup\bigl(\mathbb N_{\geq1}\times\operatorname{Pic}(D)\bigr),
\]
with addition given by addition of ranks and multiplication of ideal classes
\citep{AtiyahMacdonald1969,Weibel2013}.

\begin{lemma}[Rank-one projectives]\label{lem:rankone}
Let $D$ be a domain and let $P_D$ be a projective module of rank one.  Then
$P$ is $\Cfourstar$.
\end{lemma}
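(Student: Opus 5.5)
The plan is to reduce the statement to \cref{lem:indecomp}. The aim is to show that every nonzero submodule of $P$ is indecomposable. Then every submodule of $P$ is either zero or indecomposable, hence $\Cfour$, and so $P$ is $\Cfourstar$ by \cref{def:c4}.

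\textbf{Reading the rank.} First I fix what ``rank one'' means for a projective module over a domain $D$ with field of fractions $K$. I take it to mean that $P\otimes_D K$ is one-dimensional over $K$; when the paper uses rank for nonfree modules, as in the Steinitz form, this is the intended meaning. Since $P$ is projective, it is a direct summand of a free module, and free modules over a domain are torsion-free. So $P$ is torsion-free, and the natural map $P\to P\otimes_D K$ is injective. This identifies $P$ with a $D$-submodule of a one-dimensional $K$-vector space $V\cong K$.

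\textbf{Uniformity.} The key step is to show that $P$ is uniform. Let $U,W\leq P$ be nonzero, and choose nonzero $u\in U$ and $w\in W$. Both lie in $V\cong K$, so there are nonzero $a,b\in D$ with $ua=wb$; explicitly, if $w=u\lambda$ with $\lambda=c/d$, then $ud\,\lambda=uc$, so $wd=uc$. Because $V$ is torsion-free, this common element is nonzero. Hence $U\cap W\neq0$. This is the same intersection argument as in \cref{lem:orestructure}(a), with commutativity (or the Ore condition) replaced by one-dimensionality. It follows that $P$ is uniform, so every nonzero submodule $N\leq P$ is uniform, and a uniform module is indecomposable: a decomposition $N=N_1\oplus N_2$ with both summands nonzero would have $N_1\cap N_2=0$. \Cref{lem:indecomp} then finishes the argument.

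\textbf{Main obstacle.} The only delicate point is the meaning of ``rank one'' when $D$ is noncommutative. In that case I would use the right Ore classical quotient ring if it exists, and otherwise read rank one as ``uniform of Goldie dimension one.'' In the Dedekind application $D$ is commutative and $P\cong I$ is an invertible ideal, so the argument above applies directly. An alternative route in the Dedekind case is to observe that $P\cong I\subseteq D$ and invoke \cref{lem:orestructure}(a) verbatim for the commutative domain $D$.
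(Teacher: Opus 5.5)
Your proof is correct and follows the paper's strategy. Like the paper, you use torsion-freeness and the one-dimensionality of $K\otimes_D P$ to show that every nonzero submodule of $P$ is indecomposable, and then apply \cref{lem:indecomp}. The only difference is minor: you obtain indecomposability by proving $P$ uniform through clearing denominators, whereas the paper tensors a putative decomposition $N=A\oplus B$ with $K$ and obtains two nonzero subspaces of a one-dimensional space.
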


\begin{proof}
It is enough to show that every nonzero submodule $N\leq P$ is
indecomposable.  Let $K$ be the fraction field of $D$.  Since $P$ is
torsion-free of rank one, every nonzero submodule $N$ is torsion-free and
$K\otimes_DN$ has $K$-dimension one.  If $N=A\oplus B$ with $A,B\neq0$, then
torsion-freeness makes both $K\otimes_DA$ and $K\otimes_DB$ nonzero, giving a
decomposition of a one-dimensional $K$-space into two nonzero subspaces.  This
is impossible.  Thus every submodule of $P$ is indecomposable or zero, and
\cref{lem:indecomp} proves that every submodule is $\Cfour$.
\end{proof}

\begin{lemma}[Projectives have zero socle]\label{lem:dedekindsocle}
Let $D$ be a domain that is not a field.  Every finitely generated projective
module $P_D$ satisfies $\soc(P)=0$ and is therefore semi-weak-CS.
\end{lemma}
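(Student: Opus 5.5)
The plan is to reduce to the free case and reuse the argument of \cref{lem:orestructure}(b), which applies to any domain. Let $P_D$ be finitely generated projective. Then $P$ is a direct summand of some $D^n$, so $P$ is isomorphic to a submodule of $D^n$. Since $\soc$ is monotone under submodule inclusion and preserved by isomorphism, it is enough to show $\soc(D^n)=0$. The last assertion then follows at once from \cref{lem:zero-socle}.

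To show $\soc(D^n)=0$, suppose $0\neq S\leq D^n$ is simple and pick $0\neq s=(s_1,\dots,s_n)\in S$. Some coordinate $s_i$ is nonzero. If $sr=0$, then $s_ir=0$, and the domain hypothesis forces $r=0$. Hence $\operatorname{ann}_D(s)=0$ and $sD\cong D_D$. Simplicity of $S$ gives $S=sD$, so $D_D$ would be simple. Then every nonzero $a\in D$ satisfies $aD=D$ and is a unit, so $D$ would be a field, contrary to hypothesis.

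The only point to watch is that the hypotheses here are weaker than in \cref{lem:orestructure}: there the ring was right Ore, here $D$ is only a domain that is not a field. The argument above uses only the domain property and the fact that $D$ is not a field (commutativity is implicit in the word ``field''), so the Ore condition is never needed. No step presents a genuine obstacle.
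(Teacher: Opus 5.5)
Your proof is correct and follows the paper's argument. Both write $P$ as a summand of a free module $D^m$, use $\soc(D^m)=0$ from the argument of \cref{lem:orestructure}(b), and conclude with \cref{lem:zero-socle}. The only difference is that the paper gets $\soc(D^m)=0$ by noting that a commutative domain is Ore and citing the lemma, whereas you rerun its proof directly and rightly observe that it uses only the domain property and the fact that $D$ is not a field (i.e.\ not a division ring, given commutativity).
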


\begin{proof}
Some $D^m$ contains $P$ as a direct summand.  Since $D$ is a commutative Ore
domain, \cref{lem:orestructure}(b) gives $\soc(D^m)=0$.  Hence
$\soc(P)=0$, and \cref{lem:zero-socle} applies.
\end{proof}

\begin{theorem}[Dedekind marked-locus theorem]\label{thm:dedekindlocus}
Let $D$ be a Dedekind domain that is not a field.  Then
\begin{align*}
 \Vmon_{\Cfour}(D)
 &=\Vmon_{\Cfourstar}(D)
  =\Vmon_{\mathrm{strong}\ \Cfourstar}(D)\\
 &=\{[P]\in\Vmon(D):\operatorname{rank}_D P\leq1\},
\end{align*}
whereas
\[
 \Vmon_{\mathrm{semi\text{-}weak\text{-}CS}}(D)=\Vmon(D).
\]
Thus the three $\Cfour$-type marked loci contain every Picard class in rank
one and no class of rank at least two.
\end{theorem}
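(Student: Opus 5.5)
The plan is to split $\Vmon(D)$ by rank using the Steinitz description $P\cong D^{r-1}\oplus I$, and to treat the semi-weak-CS locus separately.

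First, semi-weak-CS. By \cref{lem:dedekindsocle}, every finitely generated projective $P_D$ has zero socle and is therefore semi-weak-CS. Hence $\Vmon_{\mathrm{semi\text{-}weak\text{-}CS}}(D)=\Vmon(D)$, and the strongly $\Cfourstar$ locus coincides with the $\Cfourstar$ locus. It then remains to show
\[
 \Vmon_{\Cfourstar}(D)\subseteq\Vmon_{\Cfour}(D)\subseteq\{\operatorname{rank}\leq1\}\subseteq\Vmon_{\Cfourstar}(D).
\]
The first inclusion is trivial. For the third, the zero module is trivially $\Cfourstar$, and a rank-one projective is $\Cfourstar$ by \cref{lem:rankone}. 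Every Picard class is represented by an invertible ideal $I$, which is rank-one projective, so every Picard class in rank one appears.

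The main step is the middle inclusion: a projective $P\cong D^{r-1}\oplus I$ with $r\geq2$ is not $\Cfour$. I will adapt \cref{prop:oreobstruction}. Write $P=A\oplus B$ with $A=De_1\cong D$, the first free summand, and $B=D^{r-2}\oplus I$. Choose a nonzero nonunit $a\in D$, which exists because $D$ is not a field. Next choose a nonzero element $b$ of a rank-one summand of $B$: either $b\in I$, or, when $r\geq3$, a free basis vector. Define $f:A\to B$ by $f(e_1d)=abd$. Since $D$ is a domain and $B$ is torsion-free, $f$ is injective, so $\ker f=0$ is a summand of $A$. It remains to show that $\im f=abD$ is not a summand of $B$.

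For this I would argue as follows. If $abD\lesum B$, then $B/abD$ is projective, hence torsion-free. Consider the image of $b$ in $B/abD$. It is nonzero, since otherwise $b=abd$, so $ad=1$ by torsion-freeness and $a$ would be a unit. On the other hand $a\cdot b\in abD$, so the image of $b$ is a nonzero element killed by $a\neq0$. This contradicts torsion-freeness. So $\im f\not\lesum B$ and $P$ is not $\Cfour$.

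The obstacle I anticipated was handling the non-free summand $I$, where the quotient-splitting argument of \cref{prop:oreobstruction} is less direct. The torsion-element argument avoids this entirely, since it only uses that $B$ is torsion-free and that $a$ is a nonunit.
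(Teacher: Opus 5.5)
Your proof is correct and has the same overall structure as the paper's. \cref{lem:dedekindsocle} gives the semi-weak-CS locus and identifies the strongly $\Cfourstar$ locus with the $\Cfourstar$ locus. \cref{lem:rankone} covers rank at most one. In rank $r\geq2$, both arguments use an injective map from a free summand $A\cong D$ into $B\cong D^{r-2}\oplus I$, sending a generator to $a$ times a nonzero element, where $a$ is a nonzero nonunit.

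The genuine difference is in proving $\im f\not\lesum B$.

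\emph{The paper's route.} It computes $B/axD\cong D^{r-2}\oplus I/axD$ and deduces that $I/axD$ is projective. It then splits $0\to axD\to I\to I/axD\to0$ and contradicts the indecomposability of $I$ from \cref{lem:rankone}. This requires $x$ to lie in the $I$-coordinate, plus a separate check that $axD\subsetneq I$.

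\emph{Your route.} You observe that the image of $b$ in $B/abD$ is nonzero and killed by $a$. This uses only that $B$ is torsion-free and $a$ is a nonunit, so properness is automatic and any nonzero $b\in B$ works. It is more elementary and more general. You do not even need projectivity of $B/abD$: a complement of $abD$ in $B$ is a submodule of the torsion-free module $B$. Hence the same argument shows that $D\oplus B$ is never $\Cfour$ for any nonzero torsion-free $B$ over a commutative domain that is not a field.

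\emph{What the paper's route buys.} It keeps the obstruction tied to rank-one indecomposability. The paper reuses that mechanism in \cref{thm:defect-reconstruction}(ii), where $B$ also contains the torsion part, so your torsion-element argument would not apply there verbatim.
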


\begin{proof}
The zero module has all four properties.  If $P$ has rank one, then
\cref{lem:rankone} shows that $P$ is $\Cfourstar$, while
\cref{lem:dedekindsocle} shows that it is semi-weak-CS.  Hence $P$ is strongly
$\Cfourstar$.

Let now $P$ have rank $r\geq2$.  By the Steinitz theorem, write
\[
 P\cong D^{r-1}\oplus I=A\oplus B,
 \qquad A\cong D,
 \qquad B\cong D^{r-2}\oplus I,
\]
where $I$ is an invertible ideal.  Choose a nonzero nonunit $a\in D$ and
$0\neq x\in I$.  Then $0\neq axD\subsetneq xD\subseteq I$, so $axD$ is a
nonzero proper submodule of $I$.  Define $f:A\to B$ by sending $1$ to $ax$
in the $I$-summand.  The
domain hypothesis gives $\ker f=0\lesum A$.

If $axD$ were a direct summand of $B$, then
\[
 B/axD\cong D^{r-2}\oplus I/axD
\]
would be projective, and hence $I/axD$ would be projective.  The sequence
\[
 0\longrightarrow axD\longrightarrow I
   \longrightarrow I/axD\longrightarrow0
\]
would split.  This would make $axD$ a nonzero proper direct summand of the
rank-one module $I$, contradicting the indecomposability proved in
\cref{lem:rankone}.  Therefore $P$ is not $\Cfour$.  It cannot be
$\Cfourstar$ or strongly $\Cfourstar$ either.  Finally,
\cref{lem:dedekindsocle} shows that every finitely generated projective $P$ is
semi-weak-CS, proving all the displayed equalities.
\end{proof}

\begin{corollary}[Order units over a Dedekind domain]
Let $D$ be as in \cref{thm:dedekindlocus}.  Every nonzero element of
$\Vmon(D)$ is an order unit.  Among these order units, the $\Cfour$,
$\Cfourstar$, and strongly $\Cfourstar$ conditions hold exactly in rank one;
the semi-weak-CS condition holds on all order units.
\end{corollary}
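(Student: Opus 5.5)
The corollary has two parts: every nonzero element of $\Vmon(D)$ is an order unit, and the marking pattern on these order units is the one given by \cref{thm:dedekindlocus}. I will prove the first part and then read off the second.

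For the first part I will use \cref{prop:orderunit}. It suffices to show that every nonzero finitely generated projective $P_D$ is a progenerator, or equivalently that $D$ is a direct summand of $P^n$ for some $n$.

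\emph{Rank one.} By the Steinitz description, $P\cong I$ for an invertible ideal $I$. Let $J=I^{-1}$, so $IJ=D$ and $J$ is also finitely generated projective. I will use the classical isomorphism $I\oplus J\cong D\oplus IJ\cong D\oplus D$ for invertible (fractional) ideals, which is part of the Steinitz theory cited before \cref{thm:dedekindlocus}. The same theory gives $I\oplus I\cong D\oplus I^2$, and more generally $I^n\cong D^{n-1}\oplus I^n$ (the second $I^n$ being an ideal power). Taking $n=2$ already gives $D\lesum I\oplus I=P^2$, which is condition (ii) of \cref{prop:orderunit}.

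\emph{Alternative for rank one.} A more elementary route avoids ideal arithmetic. The trace ideal of $P$ is a nonzero ideal $T$ satisfying $T=T^2$. In a Noetherian domain, Nakayama's lemma then forces $T=D$, so $P$ is a generator.

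\emph{Rank at least two.} Here $P\cong D^{r-1}\oplus I$ contains $D$ as a summand, so $[D]\leq[P]$ directly.

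Either route gives that every nonzero $[P]$ is an order unit. The zero class is excluded because $D\neq0$.

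For the second part, I will intersect the set of order units, which is $\Vmon(D)\setminus\{0\}$, with the loci computed in \cref{thm:dedekindlocus}. Nonzero projectives have rank at least one. The $\Cfour$, $\Cfourstar$ and strongly $\Cfourstar$ loci consist exactly of the classes of rank at most one, so on order units they are exactly the rank-one classes. The semi-weak-CS locus is all of $\Vmon(D)$, so it contains every order unit.

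The only step that needs care is the rank-one case. My first choice is the trace-ideal/Nakayama argument, since it is self-contained: the trace ideal $T$ is idempotent and finitely generated because $D$ is Noetherian, so $T=D$ (or $T=0$, which is excluded since $P\neq0$). Alternatively, I can cite $I\oplus I^{-1}\cong D^2$ from the Steinitz theory.
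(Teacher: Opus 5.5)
Your proposal is correct and follows the paper's proof. Like the paper, you show that every nonzero finitely generated projective is a progenerator: in rank at least two because $D$ is a summand, and in rank one because an invertible ideal is a generator. You then apply \cref{prop:orderunit} and read the membership pattern off \cref{thm:dedekindlocus}. Your trace-ideal/Nakayama argument, or alternatively $I\oplus I\cong D\oplus I^2$ from the Steinitz monoid law, just makes explicit the step the paper states as ``invertible and hence a generator.''
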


\begin{proof}
If $P\cong D^{r-1}\oplus I$ has $r\geq2$, it contains $D$ as a summand and is
a generator.  If $r=1$, then $P\cong I$ is invertible and hence a generator.
Thus every nonzero finitely generated projective is a progenerator, so
\cref{prop:orderunit} makes its class an order unit.  The membership statements
are \cref{thm:dedekindlocus}.
\end{proof}

\begin{theorem}[Morita rigidity]\label{thm:dedekindrigidity}
Let $D$ be a Dedekind domain that is not a field, and let $S$ be a ring Morita
equivalent to $D$.  Then the following are equivalent:
\begin{enumerate}[label=(\roman*)]
\item $S$ is a right $\Cfour$-ring;
\item $S$ is a right $\Cfourstar$-ring;
\item $S$ is a strongly right $\Cfourstar$-ring;
\item $S\cong D$.
\end{enumerate}
Moreover, every ring Morita equivalent to $D$ is a right semi-weak-CS ring.
\end{theorem}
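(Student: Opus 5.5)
Since $S$ is Morita equivalent to $D$, we reduce everything to the marked loci in $\Vmon(D)$. Choose an equivalence $G:\Mod\text{-}S\to\Mod\text{-}D$. Then $P=G(S_S)$ is a progenerator, hence a nonzero finitely generated projective $D$-module, and $S\cong\End_S(S_S)\cong\End_D(P)$. By \cref{thm:objectwise}, $S_S$ has $\mathcal Q$ exactly when $P_D$ has $\mathcal Q$, for each of the four properties. \Cref{thm:dedekindlocus} then says that (i), (ii) and (iii) are each equivalent to $\operatorname{rank}_D P=1$. This proves (i)$\Leftrightarrow$(ii)$\Leftrightarrow$(iii). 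The same theorem gives $\Vmon_{\mathrm{semi\text{-}weak\text{-}CS}}(D)=\Vmon(D)$, so every $S$ is right semi-weak-CS, which is the final assertion.

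It remains to show that $\operatorname{rank}_D P=1$ holds if and only if $S\cong D$. For (iv)$\Rightarrow$(i), take $S\cong D$; then $D_D$ is rank one, so $D$ is a right $\Cfour$-ring by \cref{thm:dedekindlocus}. Since $\Cfour$-ring status is a ring-isomorphism invariant, $S$ is a right $\Cfour$-ring.

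For the converse, suppose $\operatorname{rank}P=1$. By Steinitz, $P\cong I$ for an invertible ideal $I$, so $S\cong\End_D(I)$. I would identify $\End_D(I)$ with $(I:I)=\{x\in K:xI\subseteq I\}$ using the standard argument. Any $D$-linear map $I\to I$ extends to a $K$-linear map $K=K\otimes_D I\to K$, which is multiplication by some $x\in K$. Invertibility gives $(I:I)=I I^{-1}\cdot(I:I)\subseteq I^{-1}I=D$, so $(I:I)=D$ and $S\cong D$ as rings.

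The main obstacle is bookkeeping. One must ensure that $S$ really is isomorphic to $\End_D(P)$, with the correct side conventions. Under the standard Morita correspondence $S\cong\End_S(S_S)\cong\End_D(G(S_S))$, and since $D$ is commutative and $\End_D(I)\cong D$ is commutative, no opposite-ring issue arises. One should also note that rank-at-least-two projectives do not give $S\cong D$. This is not needed for the equivalences, since those are routed through the rank, but it is consistent: such an $S$ is not a right $\Cfour$-ring, whereas $D$ is.
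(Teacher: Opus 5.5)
Your proposal is correct and follows essentially the same route as the paper. Both transport the question to the progenerator $P=G(S_S)$ with $S\cong\End_D(P)$, read off from the Dedekind marked-locus theorem that the first three conditions amount to $\operatorname{rank}_D P=1$, and then identify $\End_D(I)\cong D$. Your colon-ideal computation $(I:I)=D$ simply makes explicit the paper's one-line appeal to the multiplication map.
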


\begin{proof}
There is a progenerator $P_D$ such that
$S\cong\End_D(P)$.  By \cref{thm:progenerator}, the regular module $S_S$ has
one of the four properties exactly when $P_D$ has that property.  Since $P$
is nonzero, \cref{thm:dedekindlocus} shows that the first three properties
hold exactly when $P$ has rank one.  In that case $P$ is an invertible ideal
and the multiplication map gives
\[
 \End_D(P)\cong D.
\]
Conversely, $D_D$ is strongly $\Cfourstar$ by
\cref{thm:dedekindlocus}.  The last assertion follows because every
finitely generated projective $D$-module is semi-weak-CS.
\end{proof}

\section{A Morita-robust ring formulation}

The obstruction in \cref{thm:ordercriterion} concerns the choice of one order
unit.  A categorical replacement is obtained by quantifying over the entire
projective monoid.

\begin{definition}\label{def:robust}
For an equivalence-invariant module property $\mathcal Q$, say that $R$ has
$\widehat{\mathcal Q}$ if every finitely generated projective right $R$-module
has $\mathcal Q$.
\end{definition}

\begin{proposition}\label{prop:robustV}
For every ring $R$,
\[
 R\text{ has }\widehat{\mathcal Q}
 \quad\Longleftrightarrow\quad
 \Vmon_{\mathcal Q}(R)=\Vmon(R).
\]
\end{proposition}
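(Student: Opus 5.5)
This is essentially a matter of unwinding \cref{def:markedV} and \cref{def:robust}, so the plan is to prove both inclusions directly. By definition, $\Vmon_{\mathcal Q}(R)$ is a subset of $\Vmon(R)$. Equality $\Vmon_{\mathcal Q}(R)=\Vmon(R)$ therefore holds exactly when every class $[P]\in\Vmon(R)$ lies in $\Vmon_{\mathcal Q}(R)$. That membership means that the representative $P_R$ has $\mathcal Q$.

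The one point to check is that this membership is well defined on isomorphism classes. This uses the standing hypothesis that $\mathcal Q$ is isomorphism-invariant, which also follows from invariance under equivalences applied to the identity functor. Hence ``$[P]\in\Vmon_{\mathcal Q}(R)$'' is equivalent to ``$P_R$ has $\mathcal Q$'' for every representative $P$ of the class, not just for some chosen one.

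For the forward implication, suppose $R$ has $\widehat{\mathcal Q}$ and let $[P]\in\Vmon(R)$. Then $P$ is finitely generated projective, so it has $\mathcal Q$, and therefore $[P]\in\Vmon_{\mathcal Q}(R)$. For the converse, suppose $\Vmon_{\mathcal Q}(R)=\Vmon(R)$ and let $P_R$ be any finitely generated projective module. Then $[P]\in\Vmon(R)=\Vmon_{\mathcal Q}(R)$, so some module isomorphic to $P$ has $\mathcal Q$. By isomorphism invariance, $P$ itself has $\mathcal Q$.

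There is no real obstacle here; the only subtlety is the representative-independence noted above. I would also remark afterwards that \cref{thm:markedV} immediately makes $\widehat{\mathcal Q}$ a Morita invariant, since $\Vmon(F)$ is a bijection carrying $\Vmon_{\mathcal Q}(R)$ onto $\Vmon_{\mathcal Q}(S)$.
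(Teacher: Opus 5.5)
Your proof is correct and is essentially the paper's argument: the paper simply says the equivalence is the translation of \cref{def:robust} through \cref{def:markedV}, and your explicit check that membership in $\Vmon_{\mathcal Q}(R)$ does not depend on the representative of $[P]$ is the only content beyond that. One small aside: isomorphism invariance is already a standing hypothesis in the paper, and it does not follow from the identity functor itself, which yields only a tautology; it follows from an autoequivalence naturally isomorphic to the identity that sends $P$ to a chosen isomorphic copy, so that parenthetical remark should be dropped or rephrased.
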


\begin{proof}
This is exactly the translation of \cref{def:robust} through
\cref{def:markedV}.
\end{proof}

\begin{theorem}[Projective-universal Morita invariance]\label{thm:robust}
The ring property $\widehat{\mathcal Q}$ is Morita invariant.
\end{theorem}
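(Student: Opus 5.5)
The plan is to reduce the statement to \cref{thm:markedV} via the translation in \cref{prop:robustV}. Let $R$ and $S$ be Morita equivalent, and fix an equivalence $F:\Mod\text{-}R\to\Mod\text{-}S$. By \cref{thm:markedV}, $F$ induces a monoid isomorphism $\Vmon(F):\Vmon(R)\to\Vmon(S)$ that maps $\Vmon_{\mathcal Q}(R)$ onto $\Vmon_{\mathcal Q}(S)$. Since $\Vmon(F)$ is a bijection carrying the marked subset exactly onto the marked subset, we get $\Vmon_{\mathcal Q}(R)=\Vmon(R)$ if and only if $\Vmon_{\mathcal Q}(S)=\Vmon(S)$. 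Indeed, if every class of $\Vmon(R)$ is marked, then by surjectivity every class of $\Vmon(S)$ is the image of a marked class, hence marked; the converse uses injectivity, or equivalently the same argument with a quasi-inverse of $F$. Applying \cref{prop:robustV} on both sides then gives that $R$ has $\widehat{\mathcal Q}$ if and only if $S$ does.

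In order, the steps are these. First, invoke \cref{thm:markedV} to obtain the bijection together with the equality of marked subsets. Second, make the elementary set-theoretic observation that a bijection $\phi:X\to Y$ with $\phi(A)=B$ satisfies $A=X\iff B=Y$. Third, translate this back to the ring property through \cref{prop:robustV}.

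No step is a genuine obstacle. The only point requiring care is already handled in \cref{thm:markedV}: the equivalence restricts to an equivalence of the finitely generated projective subcategories, so every finitely generated projective $S$-module is, up to isomorphism, $F(P)$ for some finitely generated projective $R$-module $P$. This essential surjectivity is what lets us quantify over all of $\Vmon(S)$ rather than only over the image of the free ray. If one wanted to avoid citing it, one would note that $F$ preserves projectivity and finite generation (a categorical property, as compactness of $\Hom$ or as a quotient of a finite direct sum of copies of the progenerator), and then apply a quasi-inverse $G$, observing that $Q\cong F(G(Q))$ for every finitely generated projective $Q_S$.
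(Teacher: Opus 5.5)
Your proposal is correct and follows the paper's argument: \cref{thm:markedV} identifies the projective monoids together with their $\mathcal Q$-marked subsets, so the equality $\Vmon_{\mathcal Q}(R)=\Vmon(R)$ from \cref{prop:robustV} holds for $R$ exactly when it holds for $S$. Your explicit bijection step fills in a detail that the paper leaves implicit.
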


\begin{proof}
By \cref{thm:markedV}, a Morita equivalence identifies both the ambient
projective monoids and their $\mathcal Q$-marked subsets.  Hence the equality in
\cref{prop:robustV} holds on one side if and only if it holds on the other.
\end{proof}

\begin{proposition}\label{prop:robustmatrix}
If $R$ has $\widehat{\mathcal Q}$, then $M_n(R)$ is a right
$\mathcal Q$-ring for every $n\geq1$.  Conversely, if $\mathcal Q$ is closed
under direct summands and $M_n(R)$ is a right $\mathcal Q$-ring for every
$n\geq1$, then $R$ has $\widehat{\mathcal Q}$.
\end{proposition}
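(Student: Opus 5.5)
The plan splits along the two directions of the statement; each is a short chain of results already established.

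For the forward direction, assume $R$ has $\widehat{\mathcal Q}$ and fix $n\geq1$. The module $R_R^n$ is finitely generated projective, so $R_R^n$ has $\mathcal Q$ by \cref{def:robust}. Equivalently, $n[R]\in\Vmon_{\mathcal Q}(R)$, which by \cref{prop:robustV} is all of $\Vmon(R)$. \Cref{prop:profile} (or directly \cref{cor:matrix}) then turns $n\in\mu_{\mathcal Q}(R)$ into the statement that $M_n(R)$ is a right $\mathcal Q$-ring. Nothing further is needed here.

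For the converse, assume $\mathcal Q$ is closed under direct summands and that every $M_n(R)$ is a right $\mathcal Q$-ring. By \cref{cor:matrix}, this means that $R_R^n$ has $\mathcal Q$ for every $n\geq1$. Let $P_R$ be an arbitrary finitely generated projective module. Then there is some $n$ with $P\oplus P'\cong R_R^n$, so $P$ is isomorphic to a direct summand of $R_R^n$. Since $\mathcal Q$ is isomorphism-invariant (it is preserved by equivalences, in particular by the identity functor, and is assumed isomorphism-invariant in the setup of the marked monoid), summand closure gives that $P$ has $\mathcal Q$.

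The zero module needs a separate line. It is a direct summand of $R_R^1$, so summand closure covers it as well; alternatively, $n\geq1$ already provides the ambient free module. Hence every $[P]\in\Vmon(R)$ lies in $\Vmon_{\mathcal Q}(R)$, and $R$ has $\widehat{\mathcal Q}$ by \cref{prop:robustV}.

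The only point requiring care is the interpretation of ``closed under direct summands'' as applying up to isomorphism, i.e.\ to modules isomorphic to a summand of $R^n$. This follows from the standing isomorphism-invariance of $\mathcal Q$. I expect no genuine obstacle. It may also be worth remarking that summand closure cannot be dropped. Over a nondivision right Ore domain, the $\Cfour$ profile is $\{1\}$ by \cref{thm:oredichotomy}, so the hypothesis ``every $M_n(R)$ is a right $\mathcal Q$-ring'' already fails there. The remark is therefore illustrative rather than needed for the proof.
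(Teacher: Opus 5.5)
Your proof is correct and follows the paper's own argument: the forward direction applies \cref{cor:matrix} to the finitely generated projective $R_R^n$, and the converse uses \cref{cor:matrix} to get $\mathcal Q$ on each $R_R^n$, then summand closure (up to isomorphism) for an arbitrary finitely generated projective. Your closing remark does not show that summand closure is needed, since the Ore-domain example fails the hypothesis; an example that does show it is the equivalence-invariant property of being a generator, which every $R_R^n$ has but the zero module lacks.
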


\begin{proof}
The forward implication follows because every $R_R^n$ is finitely generated
projective and \cref{cor:matrix} applies.  For the converse, every finitely
generated projective right $R$-module is a direct summand of $R_R^n$ for some
$n$, while the hypothesis and \cref{cor:matrix} say that each $R_R^n$ has
$\mathcal Q$.
\end{proof}

For $\mathcal Q\in\{\Cfour,\Cfourstar,\text{semi-weak-CS},
\text{strongly }\Cfourstar\}$, \cref{thm:robust} gives a Morita-invariant
ring property by requiring $\mathcal Q$ on every finitely generated
projective module.

\section{All finitely generated modules over Dedekind domains}
\label{sec:mixed-dedekind}

We now pass from the projective locus to arbitrary finitely generated modules.
For a module $M$ over a domain, write
\[
 \tau(M)=\{x\in M:xd=0\text{ for some }0\neq d\in D\}
\]
for its torsion submodule.  If $D$ is Dedekind and $M$ is finitely generated,
then $M/\tau(M)$ is finitely generated, torsion-free, and therefore projective.
Consequently the canonical sequence splits and
\begin{equation}\label{eq:torsion-splitting}
 M\cong P\oplus\tau(M),
 \qquad P\text{ finitely generated projective}.
\end{equation}
The module $\tau(M)$ is annihilated by a nonzero ideal and has finite length.
The issue is not the existence of \eqref{eq:torsion-splitting}, but whether
the $\Cfour$ test can be reduced to its two displayed terms: arbitrary direct
summands of $M$ need not be the chosen summands, and homomorphisms in the test
may cross from a torsion-free part to a torsion part.

\begin{definition}\label{def:c4defect}
A \emph{$\Cfour$ defect} of a module $X$ is a triple $(A,B,f)$ such that
\[
 X=A\oplus B,\qquad f:A\longrightarrow B,\qquad
 \ker f\lesum A,\qquad \im f\not\lesum B.
\]
Write $\operatorname{Def}_{\Cfour}(X)$ for the class of all such defects.
Thus $X$ is $\Cfour$ exactly when
$\operatorname{Def}_{\Cfour}(X)=\varnothing$.
\end{definition}

The following theorem describes the defects under the torsion splitting.

\begin{theorem}[Torsion-defect reconstruction and rank obstruction]
\label{thm:defect-reconstruction}
Let $D$ be a Dedekind domain that is not a field, let $M$ be a finitely
generated $D$-module, and put $T=\tau(M)$ and
$r=\operatorname{rank}_D M$.
\begin{enumerate}[label=(\roman*)]
\item If $r\leq1$, every defect of $T$ lifts explicitly to a defect of $M$,
      and every defect of $M$ reduces explicitly to a defect of $T$.
\item If $r\geq2$, then $M$ has a defect whose domain is free of rank one and
      whose image is a nonzero proper principal submodule of an invertible
      ideal summand of $M/T$.
\end{enumerate}
Consequently,
\[
 \operatorname{Def}_{\Cfour}(M)=\varnothing
 \quad\Longleftrightarrow\quad
 r\leq1\text{ and }
 \operatorname{Def}_{\Cfour}(T)=\varnothing.
\]
\end{theorem}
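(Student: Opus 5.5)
The plan is to reuse the rank obstruction from the proof of \cref{thm:dedekindlocus}, with the torsion part $T$ carried along as an extra summand. Throughout we fix the splitting \eqref{eq:torsion-splitting}, writing $M=P\oplus T$ with $P\cong D^{r-1}\oplus I$ for an invertible ideal $I$ (Steinitz) when $r\geq1$. Two elementary facts will be used repeatedly:
\begin{enumerate}[label=(\alph*)]
\item for any decomposition $X=U\oplus V$ one has $\tau(X)=\tau(U)\oplus\tau(V)$;
\item over a Dedekind domain, $\tau(Y)\lesum Y$ for every finitely generated $Y$.
\end{enumerate}
The final equivalence is then immediate from (i) and (ii).

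\textbf{Part (ii).} Take $r\geq2$, $A\cong D$ one free summand of $P$, and $B=D^{r-2}\oplus I\oplus T$. As in \cref{thm:dedekindlocus}, pick a nonzero nonunit $a\in D$ and $0\neq x\in I$, and define $f(1)=ax$. Then $\ker f=0$, and $\im f=axD$ is a nonzero proper principal submodule of $I$.

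Suppose $B=axD\oplus C$. By (a), $T=\tau(B)=\tau(C)\subseteq C$, since $axD$ is torsion-free. Hence
\[
B/T=axD\oplus C/T,
\]
so $axD$ is a direct summand of $B/T\cong D^{r-2}\oplus I$. This is exactly the situation excluded in the proof of \cref{thm:dedekindlocus}: $I/axD$ would be projective, which contradicts the indecomposability of $I$ from \cref{lem:rankone}. So $(A,B,f)$ is the required defect.

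\textbf{Part (i), lifting.} For $r=0$ we have $M=T$, and there is nothing to prove. For $r=1$, $M=I\oplus T$. Given a defect $(A,B,f)$ of $T$, I take $(A,\,B\oplus I,\,f)$, which is a decomposition of $M$ with the same kernel and image. If $\im f\oplus C=B\oplus I$, then (a) gives
\[
B=\tau(B\oplus I)=\im f\oplus\tau(C),
\]
because $\im f\subseteq T$ is torsion. That would make $\im f$ a summand of $B$, which is a contradiction.

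\textbf{Part (i), reduction.} Let $(A,B,f)$ be a defect of $M$ with $r=1$, so exactly one of $A,B$ has rank one.

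If $\operatorname{rank}A=0$, then $A$ is torsion, and $f$ lands in $\tau(B)$. By (a), $T=A\oplus\tau(B)$, and I take $(A,\tau(B),f)$. If $\im f$ were a summand of $\tau(B)$, then by (b) it would be a summand of $B$, contradicting the assumption that $(A,B,f)$ is a defect.

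If $\operatorname{rank}B=0$, then $B$ is torsion. Write $A=\ker f\oplus A'$. The restriction $f|_{A'}$ is injective with torsion image, so $A'\subseteq\tau(A)$. The modular law then gives
\[
\tau(A)=\tau(\ker f)\oplus A'.
\]
Hence $f|_{\tau(A)}$ has kernel $\tau(\ker f)$, which is a summand of $\tau(A)$, and image $\im f$, which is not a summand of $B$. Since $T=\tau(A)\oplus B$, the triple $(\tau(A),B,f|_{\tau(A)})$ is a defect of $T$.

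The main obstacle is this last case. A defect of $M$ may have its domain mixing the rank-one torsion-free part with torsion, and the decomposition $A=\ker f\oplus A'$ is what forces the effective part of $f$ to live on a torsion summand. If that modular-law step fails, I would instead compose $f$ with the projection of $\tau(A)$ onto $A'$.
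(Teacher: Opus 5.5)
Your proof is correct and follows the paper's architecture: the fixed splitting $M=P\oplus T$, lifting by enlarging the codomain by $P$, reduction by cases according to which of $A,B$ is torsion, and the witness $f(1)=ax$ for (ii). Several local steps are done differently, however.

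Where the paper transfers non-splitting by restricting a retraction to a submodule, you use the additivity $\tau(U\oplus V)=\tau(U)\oplus\tau(V)$. The paper's three retractions are onto $\im g$ restricted to $V$, onto $\im f$ restricted to $B$, and onto $axD$ restricted to $I$. In the lifting you instead push a complement into the torsion part. In (ii) you quotient by $T$ and rerun the projectivity argument of \cref{thm:dedekindlocus}. The retraction device is shorter and purely formal: it uses no torsion theory, and in (ii) it avoids the detour through $B/T$.

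The substantive difference is the case $B$ torsion. The paper chooses a complement $C$ of $K=\ker f$ and uses the witness $(C,\tau(K)\oplus B,h)$. That witness has an injective map, but it depends on the choice of $C$ and needs a further retraction argument. Your witness $(\tau(A),B,f|_{\tau(A)})$ is canonical and keeps the codomain $B$, so it inherits non-splitting of $\im f$ verbatim. The complement $A'$ is used only to check that $\ker(f|_{\tau(A)})=\tau(\ker f)$ is a summand of $\tau(A)$.

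Your closing fallback is unnecessary. The identity $\tau(A)=\tau(\ker f)\oplus A'$ is just fact (a) applied to $A=\ker f\oplus A'$ with $A'$ torsion.
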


\begin{proof}
Choose a splitting $M=P\oplus T$.  Assume first that $r\leq1$.  If
$(U,V,g)$ is a defect of $T$, then
\[
 M=U\oplus(P\oplus V),
 \qquad
 \widetilde g:U\longrightarrow P\oplus V,
 \qquad u\longmapsto(0,g(u))
\]
has the same split kernel.  Were $\im g$ a direct summand of $P\oplus V$, a
retraction onto $\im g$, restricted to $V$, would split $\im g$ in $V$.
This contradicts the choice of $(U,V,g)$, so
$(U,P\oplus V,\widetilde g)$ is a defect of $M$.

Conversely, let $(A,B,f)$ be a defect of $M$.  Since
$\operatorname{rank}A+\operatorname{rank}B\leq1$, at least one of $A,B$ is
torsion.  The torsion radical respects the decomposition:
\begin{equation}\label{eq:torsion-components}
 T=\tau(A)\oplus\tau(B).
\end{equation}
If $A$ is torsion, then $A=\tau(A)$ and $f(A)\leq\tau(B)$.  The quotient
$B/\tau(B)$ is projective, so $\tau(B)\lesum B$.  If $\im f$ split in
$\tau(B)$, it would therefore split in $B$.  Hence
$(A,\tau(B),f)$ is a defect of $T$.

Suppose instead that $B$ is torsion.  Put $K=\ker f$ and choose
$A=K\oplus C$.  The restriction $f|_C$ is injective and embeds $C$ in the
torsion module $B$, so $C$ is torsion.  Therefore
\[
 T=C\oplus\bigl(\tau(K)\oplus B\bigr).
\]
Define
\[
 h:C\longrightarrow\tau(K)\oplus B,
 \qquad h(c)=(0,f(c)).
\]
The map $h$ is injective.  If its image were a direct summand of
$\tau(K)\oplus B$, a retraction restricted to $B$ would split $\im f$ in
$B$, again contradicting that $(A,B,f)$ is a defect.  Thus
$(C,\tau(K)\oplus B,h)$ is a defect of $T$.  This proves (i), including both
explicit witness transformations.

Now let $r\geq2$.  By Steinitz decomposition,
\[
 M\cong D\oplus\bigl(D^{r-2}\oplus I\oplus T\bigr)=A\oplus B
\]
for an invertible ideal $I$.  Choose a nonzero nonunit $a\in D$ and
$0\neq x\in I$, and define $f:A\to B$ by sending $1$ to $ax$ in the
$I$-coordinate.  This map is injective.  Moreover,
$0\neq axD\subseteq aI\subsetneq I$; the last inclusion is strict because
$I$ is invertible and $a$ is not a unit.  If $axD$ split in $B$, a retraction
restricted to $I$ would make $axD$ a nonzero proper direct summand of the
rank-one module $I$, contradicting \cref{lem:rankone}.  Hence
$(A,B,f)$ is the asserted defect, proving (ii).

The final equivalence follows from (i), (ii), and
\cref{def:c4defect}.
\end{proof}

\begin{corollary}[Torsion-radical reduction for $\Cfour$]
\label{thm:torsion-reduction-c4}
Under the hypotheses of \cref{thm:defect-reconstruction},
\[
 M\text{ is }\Cfour
 \quad\Longleftrightarrow\quad
 \operatorname{rank}_D M\leq1
 \text{ and }\tau(M)\text{ is }\Cfour.
\]
\end{corollary}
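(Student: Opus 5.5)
This corollary is a restatement of the final equivalence in \cref{thm:defect-reconstruction}, read through \cref{def:c4defect}. By that definition, a module $X$ is $\Cfour$ exactly when $\operatorname{Def}_{\Cfour}(X)=\varnothing$. Apply this once with $X=M$ and once with $X=\tau(M)$, and substitute both into the displayed equivalence at the end of \cref{thm:defect-reconstruction}. Since $\operatorname{rank}_D M=r$ in the notation there, this is already the asserted statement.

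To keep the argument self-contained, I would also spell out the two directions separately.

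\emph{Forward direction.} Assume $M$ is $\Cfour$.
\begin{itemize}
\item If $r\geq2$, part (ii) of \cref{thm:defect-reconstruction} produces a defect of $M$, which contradicts the assumption. Hence $r\leq1$.
\item With $r\leq1$, part (i) says any defect $(U,V,g)$ of $\tau(M)$ lifts to the defect $(U,P\oplus V,\widetilde g)$ of $M$. Since $M$ has no defects, $\tau(M)$ has none, so $\tau(M)$ is $\Cfour$.
\end{itemize}

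\emph{Reverse direction.} Assume $r\leq1$ and $\tau(M)$ is $\Cfour$. Part (i) turns any defect $(A,B,f)$ of $M$ into a defect of $\tau(M)$:
\begin{itemize}
\item $(A,\tau(B),f)$ when $A$ is torsion;
\item $(C,\tau(K)\oplus B,h)$ when $B$ is torsion.
\end{itemize}
Since $\tau(M)$ has no defects, neither does $M$, so $M$ is $\Cfour$.

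The only point needing attention is that the defect notion is insensitive to the choice of splitting $M=P\oplus\tau(M)$. The theorem fixes one splitting, whereas being $\Cfour$ is intrinsic to $M$. This is harmless: $\tau(M)$ is a canonical submodule, and the theorem's conclusion quantifies over all decompositions of $M$. So I expect no real obstacle; the proof is essentially a one-line citation.
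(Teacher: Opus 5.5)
Your proposal is correct and matches the paper's proof: the paper also reads the final equivalence of \cref{thm:defect-reconstruction} through \cref{def:c4defect}, using $\operatorname{rank}_D M=r$. Your spelled-out directions and your remark that $\operatorname{Def}_{\Cfour}(M)$ and $\operatorname{Def}_{\Cfour}(\tau(M))$ do not depend on the chosen splitting are accurate, but the citation alone suffices.
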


\begin{proof}
Apply the final equivalence of \cref{thm:defect-reconstruction} and the
characterization in \cref{def:c4defect}.
\end{proof}

The hereditary condition has a corresponding reduction.

\begin{theorem}[Torsion-radical reduction for $\Cfourstar$]
\label{thm:torsion-reduction-c4star}
Under the hypotheses of \cref{thm:torsion-reduction-c4},
\[
 M\text{ is }\Cfourstar
 \quad\Longleftrightarrow\quad
 \operatorname{rank}_D M\leq1
 \text{ and }\tau(M)\text{ is }\Cfourstar.
\]
\end{theorem}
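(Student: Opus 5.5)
The plan is to reduce both directions to \cref{thm:torsion-reduction-c4}, applied to the submodules of $M$, which are again finitely generated because $D$ is Noetherian. Throughout we write $T=\tau(M)$ and $r=\operatorname{rank}_D M$.

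\textbf{Forward direction.} Assume $M$ is $\Cfourstar$. Then $M$ itself is $\Cfour$, so \cref{thm:torsion-reduction-c4} gives $r\leq1$. Every submodule of $T$ is a submodule of $M$, hence is $\Cfour$. Therefore $T$ is $\Cfourstar$.

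\textbf{Reverse direction.} Assume $r\leq1$ and $T$ is $\Cfourstar$, and let $N\leq M$ be arbitrary.

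\begin{enumerate}[label=(\alph*)]
\item $N$ is finitely generated, since $M$ is finitely generated over the Noetherian ring $D$.
\item $\operatorname{rank}_D N\leq\operatorname{rank}_D M\leq1$, because $K\otimes_D-$ is exact and $K\otimes_D N$ embeds in $K\otimes_D M$.
\item $\tau(N)=N\cap T$, since an element of $N$ is torsion exactly when it lies in $T$. So $\tau(N)$ is a submodule of $T$ and is $\Cfour$ by hypothesis.
\end{enumerate}

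Applying \cref{thm:torsion-reduction-c4} to $N$ then shows that $N$ is $\Cfour$. As $N$ was arbitrary, $M$ is $\Cfourstar$.

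\textbf{Points to check.} The main thing to verify is that \cref{thm:torsion-reduction-c4} applies to every finitely generated $N$, including degenerate cases.
\begin{itemize}
\item If $N$ is torsion, then $N=\tau(N)$ and the rank is $0$. The equivalence is then tautological, and the defect-transfer argument of \cref{thm:defect-reconstruction}(i) with $P=0$ is consistent with this.
\item If $N=0$, there is nothing to prove.
\end{itemize}

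No other obstacle is expected. The real work, namely the transfer of defects across the mixed splitting $N\cong P'\oplus\tau(N)$, is already contained in \cref{thm:defect-reconstruction}. In particular, one never needs to check $\Cfour$ directly on a submodule whose torsion-free part is not a chosen summand, because the corollary is stated for arbitrary finitely generated modules.
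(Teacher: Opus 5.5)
Your proposal is correct and follows the paper's proof essentially step for step. In the forward direction you use \cref{thm:torsion-reduction-c4} for the rank bound and note that submodules of $\tau(M)$ are submodules of $M$; in the converse you apply \cref{thm:torsion-reduction-c4} to an arbitrary finitely generated $N\leq M$, using $\operatorname{rank}_D N\leq1$ and $\tau(N)=N\cap\tau(M)$.
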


\begin{proof}
If $M$ is $\Cfourstar$, then it is $\Cfour$, so
\cref{thm:torsion-reduction-c4} gives
$\operatorname{rank}_D M\leq1$.  Every submodule of $\tau(M)$ is also a
submodule of $M$, and is therefore $\Cfour$; hence $\tau(M)$ is
$\Cfourstar$.

Conversely, assume the two displayed conditions and let $N\leq M$.  Since
$D$ is Noetherian, $N$ is finitely generated; moreover,
\[
 \operatorname{rank}_D N\leq1,
 \qquad \tau(N)=N\cap\tau(M)\leq\tau(M).
\]
The $\Cfourstar$ property of $\tau(M)$ implies that $\tau(N)$ is $\Cfour$.
Applying \cref{thm:torsion-reduction-c4} to $N$ shows that $N$ is $\Cfour$.
Thus every submodule of $M$ is $\Cfour$, as required.
\end{proof}

The chain part of the strong condition becomes automatic under finite
generation.

\begin{lemma}[Noetherian modules]\label{lem:noetherian-swcs}
Every Noetherian module is semi-weak-CS.
\end{lemma}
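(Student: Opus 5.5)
The plan is to show that, in a Noetherian module $M$, every chain $\mathcal C\subseteq\mathcal S(M)$ has a largest member, and that this member already supplies the required summands. The first step uses the ascending chain condition. As $(X,Y,\varphi)$ runs through $\mathcal C$, the first coordinates $X$ form a chain of submodules of $M$, and so do the second coordinates $Y$; this holds because $\preccurlyeq$ is a total order on $\mathcal C$ and it forces $X\leq X'$ and $Y\leq Y'$. Since $M$ is Noetherian, every nonempty set of submodules has a maximal element. So there is a member $(X_0,Y_0,\varphi_0)\in\mathcal C$ whose first coordinate is maximal among the first coordinates. If $\mathcal C$ is empty, both unions are $0$, and $0$ is essential in the summand $0$.

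Next I show that $(X_0,Y_0,\varphi_0)$ dominates the whole chain. Let $(X,Y,\varphi)\in\mathcal C$ be arbitrary. Either it lies below $(X_0,Y_0,\varphi_0)$, or $(X_0,Y_0,\varphi_0)\preccurlyeq(X,Y,\varphi)$. In the second case maximality gives $X=X_0$. The compatibility $\varphi|_{X_0}=\varphi_0$ then yields
\[
 Y=\varphi(X)=\varphi(X_0)=\varphi_0(X_0)=Y_0,
\]
because $\varphi$ and $\varphi_0$ are isomorphisms onto $Y$ and $Y_0$ respectively. Hence in either case $X\leq X_0$ and $Y\leq Y_0$. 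Consequently $X_{\mathcal C}=X_0$ and $Y_{\mathcal C}=Y_0$.

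To finish, recall that $X_0,Y_0\lesum M$ by the definition of $\mathcal S(M)$. Take $A=X_0$ and $B=Y_0$. Each module is essential in itself, exactly as in \cref{prop:pair}. This verifies \cref{def:swcs}.

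The only delicate point is the middle step. Maximality of the first coordinate alone does not immediately control the second coordinate, so the isomorphism compatibility has to be used to pin down $Y$ once $X$ is fixed. An alternative is to apply the ascending chain condition separately to the chain $\{X\}$ and the chain $\{Y\}$, and then pass to a single member of $\mathcal C$ that dominates both maximal elements, which exists because $\mathcal C$ is totally ordered; this avoids the compatibility argument entirely.
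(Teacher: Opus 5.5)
Your proof is correct and takes essentially the paper's route: Noetherianity gives the chain a top member whose coordinates are exactly $X_{\mathcal C}$ and $Y_{\mathcal C}$, and those coordinates serve as $A$ and $B$. The only difference is how you find that top member. The paper uses finite generation of the two unions together with linearity of the chain, whereas you use the maximal condition on first coordinates and the compatibility $\varphi|_{X_0}=\varphi_0$ to force $Y=Y_0$; you also handle the empty chain, which the paper leaves implicit.
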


\begin{proof}
Let $\mathcal C$ be a chain in $\mathcal S(M)$.  The two coordinate unions
$X_{\mathcal C}$ and $Y_{\mathcal C}$ are finitely generated.  A finite set of
their generators is contained in the coordinates of finitely many members of
the chain.  Because the chain is linearly ordered, one member lies above all
those finitely many members.  Its coordinates are therefore exactly
$X_{\mathcal C}$ and $Y_{\mathcal C}$.  These are direct summands of $M$ and
are essential in themselves, which verifies \cref{def:swcs}.
\end{proof}

We can now state the criteria in invariant-factor form.  The finite-length
torsion criterion is
\citep[Theorem~6.4]{Gokavarapu2026SmithOrbit}; the passage to arbitrary
finitely generated modules uses
\cref{thm:torsion-reduction-c4,thm:torsion-reduction-c4star}.

\begin{theorem}[Invariant-factor criteria over Dedekind domains]
\label{thm:complete-dedekind-classification}
Let $D$ be a Dedekind domain that is not a field and let $M$ be a finitely
generated $D$-module.  Put $T=\tau(M)$ and
$r=\operatorname{rank}_D M$.  For every nonzero prime ideal $\mathfrak p$ in
the support of $T$, write
\[
 T_{\mathfrak p}
 \cong
 \bigoplus_{j=1}^{m_{\mathfrak p}}
 D/\mathfrak p^{\lambda_{\mathfrak p j}},
 \qquad
 1\leq\lambda_{\mathfrak p1}\leq\cdots\leq
 \lambda_{\mathfrak p m_{\mathfrak p}}.
\]
Then:
\begin{enumerate}[label=(\roman*)]
\item $M$ is $\Cfour$ if and only if $r\leq1$ and, for every
      $\mathfrak p$, all the exponents
      $\lambda_{\mathfrak p j}$ are equal; equivalently, every primary
      component $T_{\mathfrak p}$ is homocyclic.
\item $M$ is $\Cfourstar$ if and only if $r\leq1$ and, for every
      $\mathfrak p$, either $m_{\mathfrak p}=1$ or
      $\lambda_{\mathfrak p1}=\cdots=
      \lambda_{\mathfrak p m_{\mathfrak p}}=1$; equivalently, every primary
      component is cyclic or semisimple.
\item Every finitely generated $D$-module is semi-weak-CS.
\item $M$ is strongly $\Cfourstar$ if and only if it is $\Cfourstar$, and
      hence exactly under the criterion in \textup{(ii)}.
\end{enumerate}
\end{theorem}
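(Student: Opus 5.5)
The plan is to combine the torsion-radical reductions with the finite-length torsion criterion of \citet[Theorem~6.4]{Gokavarapu2026SmithOrbit}. The reductions are \cref{thm:torsion-reduction-c4,thm:torsion-reduction-c4star}, and the only genuinely new ingredient is passing from primary components to the whole torsion module $T$.

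For (i), \cref{thm:torsion-reduction-c4} reduces the claim to: $T$ is $\Cfour$ if and only if every $T_{\mathfrak p}$ is homocyclic. I would first show that, for a finite-length module $T=\bigoplus_{\mathfrak p}T_{\mathfrak p}$ over a Dedekind domain, $T$ is $\Cfour$ if and only if each $T_{\mathfrak p}$ is $\Cfour$.

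\emph{Forward direction.} Each $T_{\mathfrak p}$ is a direct summand. The argument in the proof of \cref{thm:defect-reconstruction}(i), which lifts a defect $(U,V,g)$ of a summand to $(U,V\oplus W,\widetilde g)$ via a retraction, shows that a defect of a summand gives a defect of $T$.

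\emph{Reverse direction.} Take a decomposition $T=A\oplus B$ and a map $f:A\to B$. The primary decomposition is functorial, because $T_{\mathfrak p}$ is the set of elements annihilated by a power of $\mathfrak p$. Hence
\[
 A=\bigoplus_{\mathfrak p}A_{\mathfrak p},\qquad
 B=\bigoplus_{\mathfrak p}B_{\mathfrak p},\qquad
 f=\bigoplus_{\mathfrak p}f_{\mathfrak p},
\]
with $T_{\mathfrak p}=A_{\mathfrak p}\oplus B_{\mathfrak p}$. Moreover, $\ker f$ and $\im f$ decompose componentwise. A submodule is a summand exactly when each primary component is a summand of the corresponding component. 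So a defect of $T$ yields a defect of some $T_{\mathfrak p}$.

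With this componentwise statement in hand, the cited Theorem~6.4 identifies the $\Cfour$ primary modules as the homocyclic ones. In invariant-factor language, that means all $\lambda_{\mathfrak p j}$ are equal.

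For (ii), I would use \cref{thm:torsion-reduction-c4star} together with the same componentwise principle. Every submodule $N\le T$ satisfies $N=\bigoplus_{\mathfrak p}(N\cap T_{\mathfrak p})$, and each $N\cap T_{\mathfrak p}$ is a submodule of $T_{\mathfrak p}$. So $T$ is $\Cfourstar$ if and only if every submodule of every $T_{\mathfrak p}$ is $\Cfour$, that is, if and only if each $T_{\mathfrak p}$ is $\Cfourstar$. Theorem~6.4 then identifies these as the cyclic or semisimple components, i.e.\ $m_{\mathfrak p}=1$ or all $\lambda_{\mathfrak p j}=1$.

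For (iii), a finitely generated module over the Noetherian ring $D$ is Noetherian, so \cref{lem:noetherian-swcs} applies directly. Part (iv) is then immediate from (iii) and \cref{def:swcs}.

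The main obstacle I anticipate is the componentwise reduction in (i). Two points need care. First, $\im f$ must be a summand of $B$ exactly when each $(\im f)_{\mathfrak p}$ is a summand of $B_{\mathfrak p}$. Second, the chosen summands $A,B$ of $T$ need not align with a given primary decomposition. I would handle both by working with the canonical idempotent projections $T\to T_{\mathfrak p}$. These are given by multiplication by suitable elements of $D$ (via the Chinese remainder theorem applied to the annihilator), so every submodule and every homomorphism commutes with them. Finally, I would check that the indexing conventions in the cited Theorem~6.4 match the invariant-factor notation $T_{\mathfrak p}\cong\bigoplus_j D/\mathfrak p^{\lambda_{\mathfrak p j}}$, localizing at $\mathfrak p$ if needed, so that the criterion applies to $T_{\mathfrak p}$ as a $D$-module.
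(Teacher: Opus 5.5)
Your proposal is correct and follows the paper's proof: \cref{thm:torsion-reduction-c4,thm:torsion-reduction-c4star} combined with the cited Theorem~6.4 give (i)--(ii), and \cref{lem:noetherian-swcs} gives (iii)--(iv). The one addition is your explicit check, via the Chinese-remainder idempotents acting on $T$, that $\Cfour$ and $\Cfourstar$ for the finite-length module $T$ can be tested one primary component at a time; the paper instead treats Theorem~6.4 as already covering $T$ itself, whereas your step keeps the argument valid if that theorem is read, as in the paper's introduction, as a statement about primary modules only.
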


\begin{proof}
The finite-length torsion theorem
\citet[Theorem~6.4]{Gokavarapu2026SmithOrbit} says that $T$ is $\Cfour$
exactly when all exponents in each primary component are equal, and that $T$
is $\Cfourstar$ exactly when each primary component is cyclic or semisimple.
Combine these two criteria with
\cref{thm:torsion-reduction-c4,thm:torsion-reduction-c4star} to obtain
\textup{(i)} and \textup{(ii)}.  Since $D$ is Noetherian, $M$ is Noetherian,
so \cref{lem:noetherian-swcs} proves \textup{(iii)}.  Assertion \textup{(iv)}
follows from \textup{(ii)}, \textup{(iii)}, and \cref{def:swcs}.
\end{proof}

The same criteria determine when a direct sum has the relevant property.
For a nonzero homocyclic $\mathfrak p$-primary module $X$, let
$e_{\mathfrak p}(X)$ denote its common exponent.

\begin{theorem}[Direct-sum criterion]
\label{thm:direct-sum-compatibility}
Let $M$ and $N$ be finitely generated modules over a nonfield Dedekind domain
$D$, and write $T_M=\tau(M)$ and $T_N=\tau(N)$.
\begin{enumerate}[label=(\roman*)]
\item The module $M\oplus N$ is $\Cfour$ if and only if
      \[
       \operatorname{rank}M+\operatorname{rank}N\leq1,
      \]
      every primary component of both $T_M$ and $T_N$ is homocyclic, and,
      whenever $(T_M)_{\mathfrak p}$ and $(T_N)_{\mathfrak p}$ are both
      nonzero,
      \[
       e_{\mathfrak p}((T_M)_{\mathfrak p})
       =e_{\mathfrak p}((T_N)_{\mathfrak p}).
      \]
\item The module $M\oplus N$ is $\Cfourstar$ if and only if the same total
      rank bound holds, every primary component of $T_M$ and $T_N$ is cyclic
      or semisimple, and, on every prime in their common support, both
      primary components are semisimple.
\item The module $M\oplus N$ is always semi-weak-CS, and it is strongly
      $\Cfourstar$ exactly under the conditions in \textup{(ii)}.
\end{enumerate}
\end{theorem}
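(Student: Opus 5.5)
The plan is to apply \cref{thm:complete-dedekind-classification} to the single finitely generated module $M\oplus N$. Its rank is $\operatorname{rank}M+\operatorname{rank}N$. Its torsion radical is $\tau(M\oplus N)=T_M\oplus T_N$, because the torsion radical commutes with finite direct sums. Localizing, or equivalently taking $\mathfrak p$-primary components, gives $(T_M\oplus T_N)_{\mathfrak p}=(T_M)_{\mathfrak p}\oplus(T_N)_{\mathfrak p}$ for every nonzero prime $\mathfrak p$. With these identifications each part becomes a statement about the invariant-factor exponents of a direct sum of two finite-length $\mathfrak p$-primary modules. The exponent multiset of the sum is the union of the two exponent multisets, by uniqueness of the primary decomposition $\bigoplus_j D/\mathfrak p^{\lambda_j}$.

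For (i), \cref{thm:complete-dedekind-classification}(i) says $M\oplus N$ is $\Cfour$ exactly when the total rank is at most one and every $(T_M)_{\mathfrak p}\oplus(T_N)_{\mathfrak p}$ is homocyclic. I will check the homocyclic condition prime by prime.
\begin{itemize}
\item If only one of the two components is nonzero, the sum is homocyclic exactly when that component is.
\item If both are nonzero, the union of exponent multisets is constant exactly when each is constant and the two constants agree, that is, $e_{\mathfrak p}((T_M)_{\mathfrak p})=e_{\mathfrak p}((T_N)_{\mathfrak p})$.
\end{itemize}
Since a zero component is vacuously homocyclic, this is exactly the stated condition.

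For (ii), I will use \cref{thm:complete-dedekind-classification}(ii): each primary component of $T_M\oplus T_N$ must be cyclic or semisimple.
\begin{itemize}
\item On a prime outside the common support, the sum is just one of the two components, so the condition is that this component is cyclic or semisimple.
\item On a prime in the common support, the sum has $m_{\mathfrak p}\geq2$ summands, so it cannot be cyclic. It must therefore be semisimple, meaning all exponents equal $1$. That happens exactly when both components are semisimple.
\end{itemize}
Conversely, if both components are semisimple on the common support and each component is cyclic or semisimple elsewhere, then every primary component of the sum is cyclic or semisimple. On the common support, semisimplicity of both components already implies each is cyclic or semisimple, so the stated conditions match. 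Part (iii) then follows from \cref{thm:complete-dedekind-classification}(iii) and (iv) applied to $M\oplus N$, which is finitely generated.

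The only point needing care is the identification of the invariant-factor data of $(T_M)_{\mathfrak p}\oplus(T_N)_{\mathfrak p}$ with the union of the two exponent lists. This rests on uniqueness of the decomposition of a finitely generated torsion module over the discrete valuation ring $D_{\mathfrak p}$, together with the identification of $T_{\mathfrak p}$ with the $\mathfrak p$-primary part. Once that is granted, the rest is bookkeeping. I expect the main obstacle to be wording the common-support clause precisely, since a prime where only one of the two components is nonzero must impose no compatibility condition.
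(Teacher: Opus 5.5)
Your proposal is correct and follows essentially the same route as the paper: apply \cref{thm:complete-dedekind-classification} to $M\oplus N$ using additivity of rank and of the torsion radical, then read off (i) from the union of the two exponent multisets. For (ii) you observe that on the common support the primary component cannot be cyclic, so it must be semisimple; the only difference is that you justify non-cyclicity by uniqueness of the invariant-factor decomposition, whereas the paper phrases it as additivity of the minimal number of generators over $D_{\mathfrak p}$.
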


\begin{proof}
The torsion radical and rank are additive on finite direct sums:
\[
 \tau(M\oplus N)=T_M\oplus T_N,
 \qquad
 \operatorname{rank}(M\oplus N)
 =\operatorname{rank}M+\operatorname{rank}N.
\]
At a fixed prime $\mathfrak p$, the invariant-factor multiset of
$(T_M\oplus T_N)_{\mathfrak p}$ is the union of the two invariant-factor
multisets.  This union has one common exponent exactly when both nonzero
components are homocyclic with the same exponent.  Applying
\cref{thm:complete-dedekind-classification}(i) proves (i).

For (ii), suppose both primary components are nonzero.  Their direct sum is
semisimple precisely when both are semisimple.  It cannot be cyclic unless
one component is zero: the minimal number of generators over the local ring
$D_{\mathfrak p}$ is additive for nonzero finite-length direct summands.
Hence the direct sum is cyclic or semisimple exactly when either only one
component is nonzero and is cyclic or semisimple, or both are nonzero and
semisimple.  Apply
\cref{thm:complete-dedekind-classification}(ii).  Finally, (iii) follows from
parts (iii) and (iv) of that classification theorem.
\end{proof}

\begin{corollary}[Modules with projective and torsion parts]\label{cor:mixed-examples}
Let $I$ be an invertible ideal of $D$ and let $0\neq T$ be a finite-length
torsion module.  Then $I\oplus T$ is $\Cfour$ exactly when every primary
component of $T$ is homocyclic.  It is $\Cfourstar$, equivalently strongly
$\Cfourstar$, exactly when every primary component of $T$ is cyclic or
semisimple.
\end{corollary}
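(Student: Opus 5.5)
The plan is to read off \cref{cor:mixed-examples} from \cref{thm:complete-dedekind-classification} applied to $M=I\oplus T$. First I will compute the two invariants that enter that theorem. Since $I$ is an invertible ideal, it is a nonzero finitely generated projective $D$-module of rank one; in particular it is torsion-free. Hence $\tau(I\oplus T)=\tau(I)\oplus\tau(T)=0\oplus T=T$, using that $T$ is torsion, and $\operatorname{rank}_D(I\oplus T)=1+0=1$. So the rank hypothesis $r\leq1$ in parts (i) and (ii) of \cref{thm:complete-dedekind-classification} is automatically satisfied, and the torsion radical is exactly the given $T$.

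Next I will apply the parts in turn. By \cref{thm:complete-dedekind-classification}(i), $I\oplus T$ is $\Cfour$ if and only if every primary component of $T$ is homocyclic. By part (ii), it is $\Cfourstar$ if and only if every primary component of $T$ is cyclic or semisimple. By part (iv), strongly $\Cfourstar$ coincides with $\Cfourstar$ for finitely generated modules, since semi-weak-CS is automatic by part (iii) (i.e.\ \cref{lem:noetherian-swcs}, as $I\oplus T$ is Noetherian). Alternatively, one can route the argument through \cref{thm:torsion-reduction-c4,thm:torsion-reduction-c4star} together with the cited finite-length criterion; this is the same argument one level down.

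The only point needing care is the identification of $\tau(I\oplus T)$ with $T$ rather than merely with something isomorphic to it. This requires that the primary decomposition of $T$ used in the statement is the one entering the theorem. Since being homocyclic, cyclic, or semisimple is an isomorphism invariant, this causes no difficulty. The hypothesis $T\neq0$ is not needed for the argument; it only ensures that the example is genuinely mixed. I expect no real obstacle: the corollary is a specialization with $r=1$.
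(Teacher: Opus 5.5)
Your proposal is correct and follows the paper's proof: compute $\operatorname{rank}_D(I\oplus T)=1$ and $\tau(I\oplus T)=T$, then read off all three conclusions from parts (i), (ii) and (iv) of \cref{thm:complete-dedekind-classification}. Your remark that $T\neq0$ is unnecessary is harmless. In fact a nonzero finite-length torsion module cannot exist over a field, so this hypothesis also guarantees that $D$ is not a field, which the theorem requires.
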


\begin{proof}
The module $I\oplus T$ has rank one and torsion submodule $T$.  Apply
\cref{thm:complete-dedekind-classification}.
\end{proof}

\section{Conclusion}

We proved that $\Cfourstar$, semi-weak-CS, and strongly $\Cfourstar$ are
preserved and reflected by equivalences of module categories.  The
semi-weak-CS proof uses the joins of the compatible chains in the defining
subobject lattice.

For a module property $\mathcal Q$ preserved and reflected by equivalences,
the marked pair
\[
 (\Vmon(R),\Vmon_{\mathcal Q}(R))
\]
is transported by Morita equivalence.  The regular-module ring property is
Morita invariant if and only if the marking is constant on the order units.
The progenerator, matrix, and full-corner formulas are consequences of this
comparison, and the matrix profiles satisfy the scaling formula in
\cref{thm:profilescaling}.  Over a nondivision right Ore domain, the profiles
of $\Cfour$, $\Cfourstar$, and strongly $\Cfourstar$ are $\{1\}$, while
the semi-weak-CS profile is $\mathbb N_{\geq1}$.

For a nonfield Dedekind domain, the $\Cfour$, $\Cfourstar$, and strongly
$\Cfourstar$ marked loci are the rank-zero and rank-one projective classes;
the semi-weak-CS marked locus is $\Vmon(D)$.  It follows that a ring in the
Morita class of $D$ is right $\Cfour$, right $\Cfourstar$, or strongly right
$\Cfourstar$ precisely when it is isomorphic to $D$.  Every ring in that
Morita class is right semi-weak-CS.

For a finitely generated $D$-module $M$, the defect theorem proves
\[
 M\text{ is }\Cfour
 \quad\Longleftrightarrow\quad
 \operatorname{rank}_D M\leq1
 \text{ and }\tau(M)\text{ is }\Cfour,
\]
and the analogous equivalence for $\Cfourstar$.  Combining these reductions
with the cited finite-length torsion theorem gives the invariant-factor
criteria in \cref{thm:complete-dedekind-classification}.  The direct-sum
criterion adds the conditions on common primary support stated in
\cref{thm:direct-sum-compatibility}.  Every finitely generated $D$-module is
semi-weak-CS, so the $\Cfourstar$ criterion also characterizes strongly
$\Cfourstar$ modules in this setting.

Further work can ask for comparable marked-locus and torsion-radical
calculations over hereditary Noetherian prime, semiperfect, or serial rings,
where the projective and torsion classifications are subtler than the
rank--Steinitz and primary-decomposition descriptions used here.

\section*{Author contributions}

Chandrasekhar Gokavarapu (lead and corresponding author): conceptualization
and methodology; writing---original draft; project administration.  Sekhar
Babu Gosala: formal analysis and proofs; writing---review and editing.
Sudhakar Gadde: formal analysis and proofs; writing---review and editing.
Rajasekhar Yedla: investigation and verification; validation;
writing---review and editing.  Durga Ratna Sai Ryali: validation; notation
curation; writing---review and editing.

\section*{Preprint statement}

Version 1 of this article is available as a preprint at
\href{https://arxiv.org/abs/2604.16326v1}{arXiv:2604.16326v1}.  The present
article is the revised second version and will replace Version 1 on arXiv
shortly.

\section*{Disclosure statement}

The authors report no potential conflict of interest.

\section*{Funding}

No external funding was received for this work.

\section*{Data availability statement}

No datasets were generated or analysed because this article is purely
theoretical.

\def\doi#1#2{}
\bibliographystyle{plainnat}
\bibliography{refs}

\end{document}